\def\pmod #1{\ ({\rm{mod}}\ #1)}
\def\Z{\Bbb Z}
\def\Q{\Bbb Q}
\def\C{\Bbb C}
\def\F{\Bbb F}
\def\bg{\bigg}
\def\({\bg(}
\def\){\bg)}
\def\mo{{\rm{mod}\ }}
\def\sgn{{\rm sgn}}
\def\sgn{{\rm sgn}}
\def\End{{\rm End}}
\def\ve{\varepsilon}
\def\Ack{\medskip\noindent {\bf Acknowledgments}}
\theoremstyle{plain}
\newtheorem{theorem}{Theorem}
\newtheorem{lemma}{Lemma}
\newtheorem{corollary}{Corollary}
\newtheorem{conjecture}{Conjecture}
\theoremstyle{definition}
\theoremstyle{remark}
\newtheorem{remark}{Remark}
\begin{document}
 \baselineskip=17pt
\hbox{} {}
\medskip
\title[Elliptic curves over $\mathbb{F}_p$ and determinants of Legendre matrices]
{Elliptic curves over $\mathbb{F}_p$ and determinants of Legendre matrices}
\date{}
\author[Hai-Liang Wu] {Hai-Liang Wu}

\thanks{2020 {\it Mathematics Subject Classification}.
Primary 11C20; Secondary 11L10, 11R18.
\newline\indent {\it Keywords}. determinants, Legendre symbols, character sums, elliptic curves.
\newline \indent Supported by the National Natural Science
Foundation of China (Grant No. 11971222).}

\address {(Hai-Liang Wu) School of Science, Nanjing University of Posts and Telecommunications, Nanjing 210023, People's Republic of China}
\email{\tt whl.math@smail.nju.edu.cn}

\begin{abstract}
Determinants with Legendre symbol entries have close relations with character sums and elliptic curves over finite fields. In recent years, Sun \cite{Sun2019det}, Krachun and his cooperators \cite{Dimitry} studied this topic. In this paper, we confirm some conjectures posed by Sun and investigate some related topics.

For instance, given any integers $c,d$ with $d\ne0$ and $c^2-4d\ne0$, we show that there are infinitely many odd primes $p$ such that
$$\det\bigg[\left(\frac{i^2+cij+dj^2}{p}\right)\bigg]_{0\le i,j\le p-1}=0,$$
where $(\frac{\cdot}{p})$ is the Legendre symbol. This confirms a conjecture of Sun.
\end{abstract}
\maketitle
\section{Introduction}
\setcounter{lemma}{0}
\setcounter{theorem}{0}
\setcounter{corollary}{0}
\setcounter{remark}{0}
\setcounter{equation}{0}
\setcounter{conjecture}{0}
\setcounter{proposition}{0}
Investigating determinants concerning Legendre symbols is an active topic in number theory and finite fields. This topic has close relations with character sums, quadratic residues and elliptic curves. Throughout this paper, for any matrix $M=[a_{ij}]_{1\le i,j\le n}$ $(n\in\Z^+)$, we use $\det M$ or $|M|$ to denote the determinant of $M$.
\subsection{Classical Results}
Let $p$ be an odd prime and let $(\frac{\cdot }{p})$ denote the Legendre symbol. Given any homogeneous polynomial $H(X,Y)\in\Z[X,Y]$ and any positive integer $n$, the determinant $\det[(\frac{H(i,j)}{p})]_{1\le i,j\le n}$ was studied by many mathematicians. In the case $\deg(H)=1$ (where $\deg(H)$ denotes the degree of $H$), Carlitz \cite{carlitz} studied the following determinant
$$\det\bigg[c+\(\frac{i-j}{p}\)\bigg]_{1\le i,j\le p-1}\ \ \ \ \ (c\in\C).$$
He \cite[Theorem 4 (4.9)]{carlitz} showed that the characteristic polynomial of the matrix $[c+(\frac{i-j}{p})]_{1\le i,j\le p-1}$ is
$$f_{\chi,c}(t)=(t^2-(-1)^{(p-1)/2}p)^{(p-3)/2}(t^2-(p-1)c-(-1)^{(p-1)/2}).$$
Hence $\det[(\frac{i-j}{p})]_{1\le i,j\le p-1}=(-1)^{p-1}f_{\chi,0}(0)=p^{\frac{p-3}{2}}$. Along this line, Chapman \cite{chapman} studied the following two determinants:
$$C_1:=\det\left[\(\frac{i+j-1}{p}\)\right]_{1\le i,j\le\frac{p-1}{2}}$$
and
$$C_2:=\det\left[\(\frac{i+j-1}{p}\)\right]_{1\le i,j\le\frac{p+1}{2}}.$$
Noting that $\frac{p+1}{2}-i+\frac{p+1}{2}-j\equiv-(i+j-1)\pmod p$, we obtain that $C_1$ is equal to
\begin{align*}
\det\bigg[\(\frac{\frac{p+1}{2}-i+\frac{p+1}{2}-j-1}{p}\)\bigg]_{1\le i,j\le\frac{p-1}{2}}
=\(\frac{-1}{p}\)\det\bigg[\(\frac{i+j}{p}\)\bigg]_{1\le i,j\le\frac{p-1}{2}}.
\end{align*}
With the same reason, we also have
$$C_2=\det\bigg[\(\frac{i+j}{p}\)\bigg]_{0\le i,j\le\frac{p-1}{2}}.$$
By using quadratic Gauss sums, Chapman obtained the explicit values of them. In fact,
when $p\equiv 1\pmod 4$, let $\varepsilon_p>1$ and $h(p)$ denote the fundamental unit and class number of the real quadratic field $\mathbb{Q}(\sqrt{p})$ respectively and let $\varepsilon_p^{h(p)}=a_p+b_p\sqrt{p}$ with $2a_p,2b_p\in\Z$. Then
 	$$C_1=
 \begin{cases}
 (-1)^{(p-1)/4}2^{(p-1)/2}b_p   &\mbox{if}\ p \equiv 1 \pmod 4,\\
 0 &\mbox{if} \ p\equiv 3 \pmod 4,
 \end{cases}
 $$	
and
	$$C_2=
\begin{cases}
 (-1)^{(p+3)/4}2^{(p-1)/2}a_p   &\mbox{if}\ p \equiv 1 \pmod 4,\\
-2^{(p-1)/2} &\mbox{if} \ p\equiv 3 \pmod 4.
\end{cases}
$$	
Keep the above notations and write
$$\ve_p^{(2-(\frac{2}{p}))h(p)}=a_p'+b_p'\sqrt{p},\ \text{with}\ a_p',b_p'\in\Q.$$
Chapman \cite{evil} posed a conjecture concerning the following determinant
$$C_3:=\det\bigg[\(\frac{j-i}{p}\)\bigg]_{1\le i,j\le\frac{p+1}{2}},$$
which says that
$$
C_3=\begin{cases}
 -a_p'   &\mbox{if}\ p \equiv 1 \pmod 4,\\
1 &\mbox{if} \ p\equiv 3 \pmod 4.
\end{cases}
$$
Due to the difficulty of the evaluation on this determinant, he even called this determinant ``evil" determinant. By using an elegant matrix decomposition, Vsemirnov \cite{M1,M2} confirmed this conjecture completely (case $p\equiv3\pmod4$ in \cite{M1} and case $p\equiv1\pmod4$ in \cite{M2}).
\subsection{Sun's determinant $T_p$}
Inspired by the above works, Sun \cite{Sun2019det} studied the case $\deg(H)=2$. For example, Sun considered the determinant:
\begin{equation}\label{Eq. S(1,p)}
S(1,p):=\det\bigg[\(\frac{i^2+j^2}{p}\)\bigg]_{1\le i,j\le\frac{p-1}{2}},
\end{equation}
and showed that $-S(1,p)$ is always a quadratic residue modulo $p$. Inspired by this result, Sun \cite[Conjecture 4.4]{Sun2019det} made the following conjecture:
\begin{conjecture}\label{Conj. triangular numbers}{\rm (Sun)}
Let $p\equiv\pm3\pmod8$ be a prime. Then $-T_p$ is a quadratic non-residue modulo $p$,
where
$$T_p:=\det\bigg[\(\frac{i(i+1)+j(j+1)}{p}\)\bigg]_{1\le i,j\le\frac{p-1}{2}}.$$
\end{conjecture}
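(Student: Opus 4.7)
\textbf{Proof proposal for Conjecture~\ref{Conj. triangular numbers}.}

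The plan is to establish a congruence of the form $T_p \equiv -2 \cdot s^2 \pmod p$ for some $s \in \F_p^*$; by Euler's criterion this immediately yields $\chi(-T_p) = \chi(2) = -1$ whenever $p \equiv \pm 3 \pmod 8$. Throughout, write $\chi = (\tfrac{\cdot}{p})$.

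The first step is a reformulation. Using the identity $4(i(i+1)+j(j+1)) = (2i+1)^2 + (2j+1)^2 - 2$ together with $\chi(4)=1$, one has
$$T_p = \det\bg[\chi\bigl((2i+1)^2 + (2j+1)^2 - 2\bigr)\bg]_{1 \le i,j \le (p-1)/2}.$$
As $i$ ranges over $\{1,\dots,(p-1)/2\}$, the value $(2i+1)^2\bmod p$ takes each element of $(\mathrm{QR}(\F_p)\cup\{0\})\setminus\{1\}$ exactly once. The appearance of the constant $-2$ inside the Legendre symbol is what ultimately accounts for the factor $\chi(2)$ in the conclusion.

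The second step is a congruence computation modulo $p$. I would apply Euler's criterion $\chi(a)\equiv a^{(p-1)/2}\pmod p$, expand $(a+b)^{(p-1)/2}$ by the binomial theorem, and invoke the Cauchy--Binet formula. Writing $n=(p-1)/2$ and $x_i = i(i+1)$, the determinant becomes, up to an explicit nonzero scalar, the product of $V(x_1,\dots,x_n)^2$ (with $V$ the usual Vandermonde) and the sum $\sum_{k=0}^{n}\binom{n}{k}^{-1}\, e_k(x_1,\dots,x_n)\, e_{n-k}(x_1,\dots,x_n)$ in elementary symmetric polynomials. Both pieces are tractable modulo $p$: $V^2$ is evaluable via the factorization $x_i - x_j = (i-j)(i+j+1)$ together with Wilson-type products, while the symmetric sum collapses, via the generating function $\prod_i (1 + tx_i) = \sum_k e_k(x) t^k$, to a single character sum over $t\in\F_p$ of $\chi$ applied to an explicit polynomial in $t$ built from $\prod_i(1 - t^2 x_i^2)$.

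Finally, I would evaluate this residual character sum. The polynomial factors nicely in terms of $\{(2i+1)^2\}$, and via a Jacobi-sum or CM-elliptic-curve identity I expect the sum to reduce to $\chi(2)$ times a nonzero square, yielding $T_p \equiv -2 s^2 \pmod p$ and hence $\chi(-T_p) = -1$ whenever $p\equiv\pm3\pmod 8$. The main obstacle is precisely this last identification: confirming the exact shape of the character sum (especially that the prefactor is $-2$ rather than $-1$ or $-8$) requires careful sign-tracking through Cauchy--Binet, and a preliminary numerical check at small primes such as $p=3,5,11,13$ would be used to pin down the precise identity before attempting the general argument.
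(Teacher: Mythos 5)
Your reduction is the same one the paper uses: Euler's criterion turns $T_p$ into $\det[(i^2+i+j^2+j)^n]\bmod p$ with $n=\frac{p-1}{2}$, and the Cauchy--Binet expansion you describe is exactly the Grinberg--Sun--Zhao formula (Lemma~\ref{Lem. Grinberg}), giving $T_p\equiv \pm V^2\cdot\prod_{r=0}^n\binom{n}{r}\cdot S_n$ with $S_n=\sum_k\sigma_k\sigma_{n-k}/\binom{n}{k}$. Up to that point you are on track. The gap is that you never actually evaluate the two non-square factors, and the route you sketch for the main one would not work as stated. The weighted sum $S_n$ does not ``collapse via $\prod_i(1+tx_i)=\sum_k e_kt^k$ to a single character sum'': the weights $\binom{n}{k}^{-1}$ destroy the Cauchy-product structure, so $S_n$ is not the coefficient of $t^n$ in any product of the generating functions, and no Jacobi-sum or CM identity is needed or available here. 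You concede this yourself (``I expect the sum to reduce to\dots''), which is precisely the step that constitutes the proof. The paper's resolution is elementary and entirely different in flavor: it computes each $\sigma_k(1^2+1,\dots,n^2+n)\bmod p$ in closed form from the polynomial congruence
\begin{equation*}
\prod_{r=1}^n\bigl(X-r(r+1)\bigr)\equiv\frac{(4X+1)^{n+1}-(4X+1)}{4X}\pmod{p\Z[X]},
\end{equation*}
which gives $\sigma_k\equiv(-1)^k4^{n-k}\binom{n+1}{k}$ for $k<n$ and $\sigma_n\equiv(-1)^nn$; substituting these into $S_n$ and using $\binom{n+1}{k+1}=\frac{n+1}{k+1}\binom{n}{k}$ together with $\sum_{k=0}^{n+1}\frac{1}{k+1}\binom{n+1}{k}=\frac{2^{n+2}-1}{n+2}$ yields $S_n\equiv4(-1)^{n+1}$ for $p\equiv\pm3\pmod 8$.

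A second, smaller omission: the scalar $\prod_{r=0}^n\binom{n}{r}=n!^{\,n+1}/(0!1!\cdots n!)^2$ is not a square in general, and its Legendre symbol must be computed; the paper does this via Sun's result that $(\frac{n!}{p})=(\frac{2}{p})$ when $n$ is even, obtaining $(\frac{2}{p})$ for $p\equiv1\pmod4$ and $1$ for $p\equiv3\pmod4$. Your ``up to an explicit nonzero scalar'' sweeps this under the rug, and without it you cannot even get the sign of $\chi(T_p)$ right. Your opening identity $4(i(i+1)+j(j+1))=(2i+1)^2+(2j+1)^2-2$ is correct but is not used anywhere downstream and does not substitute for either missing computation.
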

For example, $T_5=-2,T_{11}=4,T_{13}=-8,T_{19}=928\equiv 16\pmod{19}$ and $T_{23}=-6656\equiv-9\pmod{23}$.

As the first result of this paper, we confirm this conjecture and obtain the following general result:
\begin{theorem}\label{Thm. triangular numbers}
Let $p$ be an odd prime. Then the following results hold.

{\rm (i)} If $p\equiv\pm3\pmod8$, then we have $(\frac{T_p}{p})=(-1)^{\frac{p-3}{2}}$.

{\rm (ii)} If $p\equiv\pm1\pmod8$, then we have $(\frac{T_p}{p})=(\frac{-3}{p})$.
\end{theorem}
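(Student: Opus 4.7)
My strategy is to transform $T_p$ into a determinant over a subset of $\F_p$ built from the quadratic residues, expand using Fermat's little theorem, and apply Cauchy--Binet.

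From $4\bigl(i(i+1)+j(j+1)\bigr)=(2i+1)^{2}+(2j+1)^{2}-2$ and $(\tfrac{4}{p})=1$,
\[
T_p \;=\; \det\!\Big[\Big(\tfrac{(2i+1)^{2}+(2j+1)^{2}-2}{p}\Big)\Big]_{1\le i,j\le(p-1)/2}.
\]
Squaring is a bijection from the odd residues $\{1,3,\dots,p-2\}$ onto $(\F_p^{\times})^{2}$, so $i\mapsto(2i+1)^{2}\bmod p$ is a bijection of $\{1,\dots,(p-1)/2\}$ onto $R:=\{0\}\cup\bigl((\F_p^{\times})^{2}\setminus\{1\}\bigr)$. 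Reindexing rows and columns simultaneously yields, with $\chi=(\tfrac{\cdot}{p})$,
\[
T_p \;=\; \det\bigl[\chi(r+s-2)\bigr]_{r,s\in R}.
\]

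Writing $m=(p-1)/2$, Fermat's little theorem and the binomial theorem give
\[
(r+s-2)^{m} \;=\; \sum_{k=0}^{m}\binom{m}{k}\,r^{k}(s-2)^{m-k},
\]
so modulo $p$ the matrix factors as $UDV^{\top}$, where $U_{r,k}=r^{k}$ and $V_{s,k}=(s-2)^{m-k}$ are rectangular $m\times(m+1)$ matrices and $D=\mathrm{diag}(\binom{m}{k})$. The key algebraic observation is that $\prod_{r\in R}(t-r)=t(t^{m}-1)/(t-1)=t+t^{2}+\cdots+t^{m}\pmod p$, from which
\[
e_{j}(R)\;=\;(-1)^{j}\quad(0\le j<m),\qquad e_{m}(R)\;=\;0.
\]
The Cauchy--Binet formula then collapses to a one-parameter sum:
\[
T_p \;\equiv\; (-1)^{m(m+1)/2}\,V_{R}^{2}\,\prod_{k=0}^{m}\binom{m}{k}\cdot\sum_{j=1}^{m}\frac{(-1)^{j}e_{j}(R-2)}{\binom{m}{j}}\pmod p,
\]
where the $j=0$ term drops because $e_{m}(R)=0$. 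Since $V_{R}^{2}$ is automatically a square, it does not affect $(\tfrac{T_p}{p})$; and $\binom{(p-1)/2}{k}\equiv(-4)^{-k}\binom{2k}{k}\pmod p$ reduces the prefactor, modulo squares, to $\chi\bigl(\prod_{k=0}^{m}\binom{2k}{k}\bigr)$, which is classical. The problem thus reduces to evaluating $\chi(\Sigma)$, where
\[
\Sigma\;:=\;\sum_{j=1}^{m}\frac{(-1)^{j}e_{j}(R-2)}{\binom{m}{j}},
\]
and the $e_{j}(R-2)$ are read off from $\prod_{s\in R-2}(t-s)=\sum_{k=1}^{m}(t+2)^{k}\pmod p$.

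\textbf{Main obstacle.} The entire proof reduces to evaluating $\chi(\Sigma)\pmod p$, and this is where the dichotomy of the theorem arises. The shift by $-2$ interacts with $R$ differently in the two cases: when $(\tfrac{2}{p})=1$ (so $p\equiv\pm1\pmod 8$), $2\in R$ and hence $0\in R-2$, forcing $e_{m}(R-2)=0$ and degenerating $\Sigma$; when $(\tfrac{2}{p})=-1$ (so $p\equiv\pm3\pmod 8$), all terms of $\Sigma$ are nonzero. Identifying the resulting character-sum expressions with the asserted values $(\tfrac{-3}{p})$ and $(-1)^{(p-3)/2}$ respectively, and verifying $\Sigma\not\equiv 0\pmod p$ so that $(\tfrac{T_p}{p})\in\{\pm1\}$, constitutes the principal technical hurdle, likely requiring an auxiliary Jacobi-sum identity in each residue class.
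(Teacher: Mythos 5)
Your reduction is sound and is, in substance, the same route the paper takes: the simultaneous reindexing $i\mapsto(2i+1)^2$ together with Euler's criterion turns $T_p$ into $\det[(x_r+y_s)^m]$ with $x_r=r$, $y_s=s-2$, and your Cauchy--Binet factorization of that matrix is exactly the content of the Grinberg--Sun--Zhao determinant formula that the paper invokes as Lemma 2.1, with your $e_j(R)$ and $e_j(R-2)$ playing the role of the elementary symmetric functions $\sigma_k$; your identity $\prod_{r\in R}(t-r)=t+t^2+\cdots+t^m$ is the analogue of the paper's Lemma 2.2. Up to that point the argument checks out, modulo minor slips (the sign should be $(-1)^{m(m-1)/2}$ rather than $(-1)^{m(m+1)/2}$, and the assertion that $\chi\bigl(\prod_{k}\binom{2k}{k}\bigr)$ is ``classical'' is left unproved --- the paper instead writes $\prod_{r}\binom{n}{r}=n!^{\,n+1}/(0!1!\cdots n!)^2$ and quotes Sun's evaluation $(\frac{n!}{p})=(\frac{2}{p})$ for $n$ even).

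The proof is nevertheless incomplete, and the gap is not peripheral: the entire arithmetic content of the theorem lies in evaluating your sum $\Sigma=\sum_{j}(-1)^je_j(R-2)/\binom{m}{j}$ modulo $p$, and you explicitly leave this as an open ``obstacle,'' speculating that it requires a Jacobi-sum identity. It does not; it is an elementary but unavoidable binomial computation, and without it none of the claimed values is established. In the paper this step is the passage from Lemma 2.2, which gives $\sigma_k\equiv(-1)^k4^{\,n-k}\binom{n+1}{k}$ for $k<n$ and $\sigma_n\equiv(-1)^nn$, to the closed form $S_n\equiv 4(-1)^{n+1}$ or $\tfrac43(-1)^{n+1}$ according as $p\equiv\pm3$ or $\pm1\pmod 8$, obtained via $\binom{n+1}{k+1}=\frac{n+1}{k+1}\binom{n}{k}$ and the identity $\sum_{k=0}^{n+1}\frac1{k+1}\binom{n+1}{k}=\frac{2^{n+2}-1}{n+2}$. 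Note that the $\bmod\ 8$ dichotomy enters through $2^{n+2}=4\cdot 2^{n}\equiv 4(\frac2p)\pmod p$ in that identity, not merely through whether $0\in R-2$; your heuristic about $e_m(R-2)$ vanishing identifies the same dividing line but by itself yields neither the precise values $(-1)^{(p-3)/2}$ and $(\frac{-3}{p})$ nor the crucial nonvanishing $\Sigma\not\equiv0\pmod p$. Until $\Sigma$ is actually computed, the theorem is not proved.
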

\subsection{The determinants $(c,d)_p$ and $[c,d]_p$}
Given an odd prime $p$ and integers $c,d$, Sun \cite{Sun2019det} also studied the following determinants:
\begin{equation}\label{Eq. (c,d)p}
(c,d)_p:=\det\bigg[\(\frac{i^2+cij+dj^2}{p}\)\bigg]_{1\le i,j\le p-1},
\end{equation}
and
\begin{equation}\label{Eq. [c,d]p}
[c,d]_p:=\det\bigg[\(\frac{i^2+cij+dj^2}{p}\)\bigg]_{0\le i,j\le p-1}.
\end{equation}
For example, Sun showed that $(c,d)_p=0$ whenever $d$ is a quadratic non-residue modulo $p$. These determinants were later investigated by Krachun, Petrov, Sun, and Vsemirnov \cite{Dimitry}. For example, they proved that $(6,1)_p=[6,1]_p=(3,2)_p=[3,2]_p=0$ for any primes $p\equiv3\pmod4$. This confirms a conjecture of Sun \cite[Conjecture 4.8(ii)]{Sun2019det}. In this paper, we study some properties of these determinants with the help of character sums and elliptic curves over finite fields. Sun \cite[Conjecture 4.7(i)]{Sun2019det} made the following conjecture.
\begin{conjecture}\label{Conj. [c,d]p}{\rm (Sun)}
If $d$ is non-zero and $c^2-4d\ne0$, then $[c,d]_p=0$ for infinitely many primes $p$.
\end{conjecture}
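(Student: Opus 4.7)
\emph{Proof proposal.} The strategy is to exhibit, for every odd prime $p$ outside a finite set, an explicit non-zero row vector $v$ with $vM$ equal to a scalar multiple of $v$, where $M:=[(\f{i^2+cij+dj^2}{p})]_{0\le i,j\le p-1}$ and the scalar is a character sum that coincides (up to sign) with the trace of Frobenius of a fixed elliptic curve $E/\Q$. Elkies' theorem will then supply infinitely many primes $p$ where this scalar vanishes, forcing $M$ to be singular.

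\emph{Step 1: a left eigenvector from the Legendre symbol.} Write $\chi=(\f{\cdot}{p})$ and take $v=(\chi(0),\chi(1),\ldots,\chi(p-1))$. I will compute $(vM)_j=\sum_{i=0}^{p-1}\chi(i)\chi(i^2+cij+dj^2)$ directly. For $j=0$ the sum collapses to $\sum_{i=1}^{p-1}\chi(i)=0$. For $j\neq 0$, after using the multiplicativity of $\chi$ and the substitution $i=jx$, the factor $j^3$ comes out as $\chi(j)$, leaving
\[
(vM)_j=\chi(j)\cdot T_p,\qquad T_p:=\sum_{x\in\F_p}\chi(x^3+cx^2+dx).
\]
Hence $vM=T_p\cdot v$, and since $v\neq 0$, whenever $T_p=0$ the matrix $M$ is singular and $[c,d]_p=\det M=0$.

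\emph{Step 2: reading $T_p$ off an elliptic curve.} The hypotheses $d\neq 0$ and $c^2-4d\neq 0$ ensure that the cubic $x(x^2+cx+d)$ is separable, so $E\colon y^2=x^3+cx^2+dx$ defines an elliptic curve over $\Q$. Counting affine points on $E$ over $\F_p$ by the value of $\chi(x^3+cx^2+dx)$ gives the standard identity $\#E(\F_p)=p+1+T_p$, equivalently $T_p=-a_p(E)$, at every prime $p$ of good reduction.

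\emph{Step 3: invoking Elkies.} By Elkies' theorem (1987), every elliptic curve over $\Q$ admits infinitely many supersingular primes, i.e., primes $p$ of good reduction with $a_p(E)=0$. For each such $p$, Step 2 yields $T_p=0$ and Step 1 then produces $[c,d]_p=0$, giving the required infinite family.

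The only deep input is Elkies' theorem in Step 3; Steps 1 and 2 are short character-sum manipulations, and the conjecture's assumptions on $(c,d)$ enter exactly once, to ensure that $E$ is non-singular. The main obstacle is therefore conceptual rather than technical: recognising that the natural vector $(\chi(i))_i$ is almost always in the left kernel of $M$, with obstruction equal to the trace of Frobenius of the specific elliptic curve $y^2=x(x^2+cx+d)$.
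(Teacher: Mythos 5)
Your proposal is correct and follows essentially the same route as the paper: both exhibit the vector $(\chi(i))_{0\le i\le p-1}$ as an explicit eigenvector of the matrix (you use it on the left, the paper on the right, with the eigenvalue being the same character sum $\sum_x\chi(x)\chi(x^2+cx+d)=\sum_x\chi(dx^3+cx^2+x)$, i.e.\ minus the trace of Frobenius of the curve $y^2=x(x^2+cx+d)$), and then both invoke Elkies' infinitude of supersingular primes to force that eigenvalue, and hence the determinant, to vanish infinitely often.
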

By using some theory of supersingular elliptic curves we confirm this conjecture and obtain the following result.
\begin{theorem}\label{Thm. [c,d]p}
{\rm (i)} Given any integers $c,d$ with $d\ne0$ and $c^2-4d=0$, we have
$[c,d]_p=(\frac{-2c}{p})\times(p-1)$ for any odd prime $p$.

{\rm (ii)} Given any integers $c,d$ with $d\ne0$ and $c^2-4d\ne0$, there are infinitely many primes $p$ such that $[c,d]_p=0$. Moreover, let $p$ be an odd prime with $p\nmid c^2-4d$, and let $A_{c,d}(p)$ denote the coefficient of $x^{\frac{p-1}{2}}$ of the polynomial $(dx^2+cx+1)^{\frac{p-1}{2}}$. Then $[c,d]_p=0$ if $p\mid A_{c,d}(p)$.
\end{theorem}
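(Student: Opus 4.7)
Part (i) is elementary. When $c^2=4d$, necessarily $c=2e$ and $d=e^2$ with $e\in\Z$, so $i^2+cij+dj^2=(i+ej)^2$. For $p\nmid c$, the matrix equals $J-P_\sigma$, where $J$ is the $p\times p$ all-ones matrix and $P_\sigma$ is the permutation matrix of $\sigma\colon j\mapsto -ej\pmod p$. Since $P_\sigma J=J$, we obtain $\det(J-P_\sigma)=\mathrm{sgn}(\sigma)\cdot\det(J-I)$; the spectrum $\{p,0,\ldots,0\}$ of $J$ gives $\det(J-I)=p-1$, and Zolotarev's lemma yields $\mathrm{sgn}(\sigma)=\left(\frac{-e}{p}\right)=\left(\frac{-2c}{p}\right)$. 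The degenerate case $p\mid c$ (equivalently $p\mid d$) produces a zero row and makes $\left(\frac{-2c}{p}\right)=0$, so the formula holds in every case.

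For part (ii), the heart of the plan is to exhibit an explicit eigenvector of $N:=\left[\left(\frac{i^2+cij+dj^2}{p}\right)\right]_{0\le i,j\le p-1}$. Let $v\in\Z^p$ have coordinates $v_j:=\left(\frac{j}{p}\right)$ for $0\le j\le p-1$. I will show that $Nv=-a_p\,v$, where $a_p:=p+1-\#E(\F_p)$ is the trace of Frobenius of $E\colon y^2=x(x^2+cx+d)=x^3+cx^2+dx$. Using the identity $\left(\frac{i^2+cij+dj^2}{p}\right)=\left(\frac{(i/j)^2+c(i/j)+d}{p}\right)$ for $j\ne0$ and substituting $k=i/j$, one computes
$$(Nv)_i=\left(\frac{i}{p}\right)\sum_{k\in\F_p^*}\left(\frac{k^3+ck^2+dk}{p}\right)=-a_p\,v_i\qquad(i\ne0),$$
while $(Nv)_0=0$ follows from $\sum_{j\in\F_p^*}\left(\frac{j}{p}\right)=0$. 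Hence $a_p=0$ implies $v\in\ker N$, which, since $v\ne 0$, forces $[c,d]_p=0$.

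It remains to match $A_{c,d}(p)$ with $a_p$ modulo $p$. The standard Hasse-invariant formula gives $a_p\equiv[X^{p-1}](X^3+cX^2+dX)^{(p-1)/2}\pmod p$; factoring out $X^{(p-1)/2}$ and using that the middle coefficient of $(X^2+cX+d)^{(p-1)/2}$ coincides with that of the reversed polynomial $(dX^2+cX+1)^{(p-1)/2}$ then yields $a_p\equiv A_{c,d}(p)\pmod p$. Under the hypotheses of the theorem, if the bad-reduction case $p\mid d$ held (which, combined with $p\nmid c^2-4d$, would force $p\nmid c$), a short character-sum computation would give $a_p=\left(\frac{c}{p}\right)=\pm1\not\equiv0\pmod p$, contradicting $p\mid A_{c,d}(p)$. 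Therefore $E$ has good reduction at $p$, and the Hasse bound $|a_p|\le 2\sqrt p$ forces $a_p=0$ for every $p\ge5$; the small case $p=3$ is handled by direct verification (the curve $y^2=x(x^2+d)$ over $\F_3$ with $3\nmid d$ has $j$-invariant $1728$ and is supersingular since $3\equiv3\pmod4$).

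Finally, to exhibit infinitely many primes $p$ with $[c,d]_p=0$, I will invoke Elkies's theorem that every elliptic curve over $\Q$ has infinitely many supersingular primes. Applied to $E$, this yields infinitely many $p$ with $a_p=0$ and hence $[c,d]_p=0$. The principal obstacle is discovering and verifying the eigenvector identity $Nv=-a_p\,v$; once this is in place, the remaining ingredients (the reversal trick linking $A_{c,d}(p)$ to the Hasse invariant and the existence of infinitely many supersingular primes) are classical.
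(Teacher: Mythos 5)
Your proof is correct and follows essentially the same route as the paper: part (i) is the same Zolotarev-plus-$\det(J-I)$ computation, and part (ii) rests on the same key observation that the vector of Legendre symbols $\left(\left(\frac{j}{p}\right)\right)_{0\le j\le p-1}$ is an eigenvector of the matrix with eigenvalue $-a_p$, combined with Elkies's theorem on infinitely many supersingular primes. The only cosmetic differences are that you work with $y^2=x^3+cx^2+dx$ rather than the paper's reversed cubic $y^2=dx^3+cx^2+x$ (the two character sums coincide under $x\mapsto 1/x$), and that you pass from $p\mid A_{c,d}(p)$ to $a_p=0$ via the congruence with the Hasse invariant and the Hasse bound (special-casing $p=3$ and bad reduction) rather than by quoting Deuring's supersingularity criterion directly.
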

\begin{remark}\label{Remark A}
(i) $A_{c,d}(p)$ is usually called the generalized central trinomial coefficient. Readers may refer to \cite{Sun201401} for a detailed introduction on this topic.

(ii) $A_{c,d}(p)$ also has close relations with the number of rational points over a curve in $\F_p$. Let $E_{c,d}$ be a curve over $\F_p$ defined by
$$E_{c,d}:=\{(x,y)\in\F_p\times\F_p:\ y^2=x(dx^2+cx+1)\}\cup\{\infty\},$$
and let $\#E_{c,d}=p+1-a_{c,d}(p)$, where $\#S$ denotes the cardinality of a set $S$. We claim that $a_{c,d}(p)\equiv A_{c,d}(p)\pmod p$. In fact, one can verify that
$$\#E_{c,d}(p)=1+\sum_{x=0}^{p-1}\(1+\(\frac{x(dx^2+cx+1)}{p}\)\).$$
This gives
$$a_{c,d}(p)=-\sum_{x=0}^{p-1}\(\frac{x(dx^2+cx+1)}{p}\).$$
On the other hand, for any integer $k$ we have (cf. \cite[p. 235, Lemma 2]{classical})
$$
\sum_{x=0}^{p-1}x^k\equiv\begin{cases}
0\ {\rm mod}\ p   &\mbox{if}\ p-1\nmid k,\\
-1\ {\rm mod}\ p &\mbox{if} \ p-1\mid k.
\end{cases}
$$
Hence by Euler's criterion
\begin{align*}
a_{c,d}(p)=-\sum_{x=0}^{p-1}\(\frac{x(dx^2+cx+1)}{p}\)
&\equiv-\sum_{x=0}^{p-1}x^{\frac{p-1}{2}}(dx^2+cx+1)^{\frac{p-1}{2}}\\
&\equiv-\sum_{x=0}^{p-1}A_{c,d}(p)x^{p-1}\equiv A_{c,d}(p)\pmod p.
\end{align*}
\end{remark}
To make the above result explicit, we give an example here. Consider the determinant $[6,1]_p$ (where $p>3$ is a prime). By using some elegant congruences, Krachun and his cooperators \cite[Corollary 1.1]{Dimitry} showed that $[6,1]_p=0$ if $p\equiv3\pmod4$. Here we give another proof by using Theorem \ref{Thm. [c,d]p}. Let $E_{6,1}$ be the elliptic curve over $\Q$ with complex multiplication by the imaginary quadratic field $\Q(\sqrt{-1})$ defined by the equation: $y^2=x^3+6x^2+x$. We let
$$\#E_{6,1}(\F_p)=1+\#\{(x,y)\in\F_p^2: y^2=x^3+6x^2+x\}=p+1-a_{6,1}(p).$$
As $E_{6,1}$ has complex multiplication by $\Q(\sqrt{-1})$, it is known that $a_{6,1}(p)=0$ if $(\frac{-1}{p})=-1$. By (ii) of Remark \ref{Remark A} we have $A_{6,1}(p)\equiv a_{6,1}(p)\equiv0\pmod p$.
This implies that $[6,1]_p=0$ if $p\equiv3\pmod4$ and $p>3$.

Now we turn back to the determinant $S(1,p)$ defined in (\ref{Eq. S(1,p)}). In 2019, Sun conjectured that $-S(1,p)$ is an integral square if $p\equiv3\pmod4$. By using an elegant matrix decomposition,
Alekseyev and Krachun confirmed this conjecture. In the case $p\equiv1\pmod4$, if we write $p=a^2+4b^2$ with $a,b\in\Z$ and $a\equiv1\pmod4$, then Cohen, Sun and Vsemirnov conjectured that $S(1,p)/a$ is an integral square (see \cite[Remark 4.2]{Sun2019det}). This conjecture was later proved by the author \cite{Wu}. In this paper, we generalize the above results to determinants $(c,d)_p$ with $p\nmid d$ defined in (\ref{Eq. (c,d)p}). Moreover, we claim that it suffices to study the case $d=1$. In fact,
Sun \cite[Theorem 1.3(i)]{Sun2019det} showed that $(c,d)_p=0$ if $(\frac{d}{p})=-1$. Now we suppose that $d\equiv f^2\pmod p$, and let $g\in\Z$ with $g\equiv c/f\pmod p$. Then by Zolotarev's lemma \cite{Zolotarev} (see also Lemma \ref{Lem. Z lemma} in Section 3) one can verify that
\begin{equation}\label{Eq. introduction}
(c,d)_p=\(\frac{f}{p}\)\times(g,1)_p.
\end{equation}
Hence we just need to consider the determinants $(c,1)_p$.
Let $p>3$ be an odd prime. When $c\not\equiv\pm2\pmod p$, clearly the equation $y^2=x(x^2+cx+1)$ defines an elliptic curve $E_c(\F_p)$ over the finite field $\F_p$, i.e.,
$$E_c(\F_p)=\{(x,y)\in\F_p\times\F_p: y^2=x(x^2+cx+1)\}\cup\{\infty\}$$
is an elliptic curve over $\F_p$. With the standard notation, we set $\#E_c(\F_p)=p+1-a_p(c)$. It is known that $a_p(c)$ can be interpreted as the trace of the Frobenius endomorphism (see \cite[p. 144]{Silverman}). Now we state our result.
\begin{theorem}\label{Thm. (c,1)p}
Let $p>3$ be an odd prime. Then the following results hold.

{\rm (i)} If $c\equiv\pm2\pmod p$, then we have $(c,1)_p=(\frac{-2c}{p})\times(2-p)$.

{\rm (ii)} If $c\not\equiv\pm2\pmod p$, then $(c,1)_p=2a_p(c)x_p(c)^2$ for some integer $x_p(c)$.
\end{theorem}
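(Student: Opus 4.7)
This is a direct matrix computation. When $c\equiv\pm2\pmod p$ the entries become $(\frac{(i\pm j)^2}{p})\in\{0,1\}$. For $c\equiv-2$, the matrix is $J-I$ of size $p-1$, with determinant $(-1)^{p-2}(p-2)=2-p=(\frac{4}{p})(2-p)$. For $c\equiv2$, the zeros lie on the anti-diagonal $i+j=p$; writing the matrix as $J-P$ with $P$ the permutation matrix of $j\mapsto p-j$, using $PJ=J$ and factoring $J-P=P(J-I)$ gives $\det=\sgn(P)(2-p)=(-1)^{(p-1)/2}(2-p)=(\frac{-4}{p})(2-p)$. In both cases this equals $(\frac{-2c}{p})(2-p)$.

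\textbf{Part (ii): setup.} My plan is to diagonalize $M:=[(\frac{i^2+cij+j^2}{p})]_{1\le i,j\le p-1}$ by the characters of $\F_p^*$. Setting $g(t):=(\frac{t^2+ct+1}{p})$ for $t\in\F_p^*$, one sees $M_{ij}=g(ij^{-1})$, which identifies $M$ with the convolution operator by $g$ on $\C[\F_p^*]$; the Dirichlet characters are then simultaneous eigenvectors with eigenvalues
\[\hat g(\chi):=\sum_{s\in\F_p^*}g(s)\,\overline{\chi}(s),\qquad \chi\in\widehat{\F_p^*},\]
and $(c,1)_p=\prod_\chi\hat g(\chi)$. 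Two of these admit direct evaluation: using the standard identity $\sum_{s\in\F_p}(\frac{s^2+cs+1}{p})=-1$ (valid since $c^2-4\not\equiv0\pmod p$) and subtracting the $s=0$ term I obtain $\hat g(\chi_0)=-2$, where $\chi_0$ is the trivial character; for the Legendre character $\chi_1=(\frac{\cdot}{p})$, the computation already recorded in Remark \ref{Remark A} yields $\hat g(\chi_1)=\sum_{s\in\F_p}(\frac{s(s^2+cs+1)}{p})=-a_p(c)$.

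\textbf{Inversion symmetry and squaring.} The crucial ingredient is the identity $g(s)=g(s^{-1})$, a consequence of $s^{-2}(s^2+cs+1)=1+cs^{-1}+s^{-2}$. This yields $\hat g(\chi)=\hat g(\chi^{-1})$ for every character $\chi$. Since $\F_p^*$ is cyclic of even order $p-1$, only $\chi_0$ and $\chi_1$ satisfy $\chi=\chi^{-1}$, so the remaining $p-3$ characters partition into $(p-3)/2$ unordered conjugate pairs $\{\chi,\chi^{-1}\}$, each contributing $\hat g(\chi)\hat g(\chi^{-1})=\hat g(\chi)^2$ to $\det M$. Defining
\[x_p(c):=\prod_{\{\chi,\chi^{-1}\}}\hat g(\chi),\]
with the product running over these $(p-3)/2$ pairs, assembly gives $(c,1)_p=\hat g(\chi_0)\,\hat g(\chi_1)\,x_p(c)^2=2a_p(c)\,x_p(c)^2$.

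\textbf{Main obstacle.} The remaining and critical step is to check $x_p(c)\in\Z$. Each factor $\hat g(\chi)$ is an algebraic integer in $\Z[\zeta_{p-1}]$, so $x_p(c)$ is automatically an algebraic integer. For $\Q$-rationality I will invoke Galois theory: the group $\Gal(\Q(\zeta_{p-1})/\Q)\cong(\Z/(p-1)\Z)^*$ acts on characters by $\sigma_a\colon\chi\mapsto\chi^a$, and this action commutes with the involution $\chi\mapsto\chi^{-1}$; hence it permutes the unordered pairs that define $x_p(c)$. So $x_p(c)$ is Galois-invariant and lies in $\Q$, and being also an algebraic integer it lies in $\Z$, completing the proof.
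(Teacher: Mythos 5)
Your proof is correct and follows essentially the same route as the paper: diagonalize by the multiplicative characters of $\F_p^*$, evaluate the trivial and quadratic eigenvalues as $-2$ and $-a_p(c)$, pair the remaining eigenvalues via $\chi\leftrightarrow\chi^{-1}$, and use Galois invariance together with algebraic integrality to place the paired product in $\Z$. The only cosmetic differences are that the paper packages the integrality step as a reusable circulant-matrix lemma (its Lemma \ref{Lem. The key lemma}, stated at the level of polynomial identities) and handles part (i) via Zolotarev's lemma rather than your direct $J-I$ and $J-P$ computation; both variants are sound.
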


Sun \cite[Conjecture 4.8]{Sun2019det} posed the following conjectures:
\begin{conjecture}\label{Conj. (c,d)p and squares}{\rm (Sun)}
{\rm (i)} Let $p\equiv1\pmod4$ be a prime with $p=x^2+4y^2$ $(x,y\in\Z$ and $4\mid x-1)$. Then $(3,2)_p=(-1)^{\frac{p-1}{4}}xv^2$ for some integer $v$.

{\rm (ii)} Let $p\equiv1\pmod8$ be a prime with $p=x^2+2y^2$ with $(x,y\in\Z$ and $4\mid x-1)$. Then
$(4,2)_p=(8,8)_p=(-1)^{\frac{p-1}{8}}xw^2$ for some integer $w$. Also, let $p\equiv1\pmod{12}$ be a prime with $p=x^2+3y^2$ $(x,y\in\Z$ and $3\mid x-1)$. Then $(3,3)_p/x$ is an integral square.

{\rm (iii)} Let $p\equiv 1,9,25\pmod{28}$ with $p=x^2+7y^2$ $(x,y\in\Z$ and $(\frac{x}{7})=1)$. Then
$(21,112)_p/x$ is an integral square.
\end{conjecture}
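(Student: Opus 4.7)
The plan is to reduce each part of Conjecture \ref{Conj. (c,d)p and squares} to a computation of the Frobenius trace $a_p$ of a specific CM elliptic curve, via the chain $(c,d)_p\rightsquigarrow (g,1)_p\rightsquigarrow 2a_p(g)\cdot(\text{square})$ supplied by equation \eqref{Eq. introduction} and Theorem \ref{Thm. (c,1)p}(ii).

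When $(\frac{d}{p})=-1$ the determinant $(c,d)_p$ vanishes by Sun's theorem, and the asserted equality holds trivially with the accompanying square equal to $0$; so I may assume $(\frac{d}{p})=1$. Fixing $f\in\Z$ with $f^2\equiv d\pmod p$ and setting $g\equiv c/f\pmod p$, equation \eqref{Eq. introduction} combined with Theorem \ref{Thm. (c,1)p}(ii) gives
\[
(c,d)_p=\left(\frac{f}{p}\right)\cdot 2\,a_p(g)\cdot x_p(g)^2,
\]
where $a_p(g)$ is the Frobenius trace on $E_g\colon y^2=x(x^2+gx+1)$ over $\F_p$. A direct computation with the $j$-invariant formula $j(E_g)=256(g^2-3)^3/(g^2-4)$ shows that $g^2\equiv 9/2,\,8,\,3,\,63/16\pmod p$ (coming respectively from $(c,d)=(3,2)$, $(4,2)$ or $(8,8)$, $(3,3)$, and $(21,112)$) yield $j(E_g)=1728,\,8000,\,0,\,-3375$. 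Hence $E_g$ is an $\F_p$-twist of the mod-$p$ reduction of an elliptic curve over $\Q$ with complex multiplication by $\Z[i]$, $\Z[\sqrt{-2}]$, $\Z[\omega]$, and the maximal order of $\Q(\sqrt{-7})$ respectively (all four CM orders have class number one).

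For each of these CM curves, Deuring's theorem together with the attached Hecke Gr\"ossencharakter expresses $a_p(g)$ in the form $\pm 2x$, where $p=x^2+ny^2$ with $n\in\{4,2,3,7\}$ matching the representation in each part of the conjecture. Substituting this back gives $(c,d)_p=\pm\left(\frac{f}{p}\right)\cdot x\cdot(2x_p(g))^2$, which already has the predicted shape $\pm x\cdot(\text{integer square})$.

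The main obstacle is pinning down the sign. The normalizations $4\mid x-1$, $3\mid x-1$, and $(\frac{x}{7})=1$ in the conjecture are precisely the standard sign-fixing conventions for the four Hecke characters in question, so I expect them to determine the sign uniquely once the correct quadratic twist identifying $E_g$ with its CM model over $\F_p$ is located. Concretely I would compute this twist from the $j$-invariant together with the discriminant of $E_g$, unwind the Hecke character following the Ireland--Rosen treatment of CM elliptic curves, and cross-check with a single numerical value of $a_p$ in each family. This should yield the signs $(-1)^{(p-1)/4}$ and $(-1)^{(p-1)/8}$ appearing in (i) and (ii) and confirm the pure-square statements for $(3,3)_p/x$ and $(21,112)_p/x$.
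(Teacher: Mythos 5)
Your proposal follows the paper's skeleton exactly through the first two steps: reduce $(c,d)_p$ to $(g,1)_p$ via \eqref{Eq. introduction} (after disposing of the case $(\frac{d}{p})=-1$ by Sun's vanishing theorem), then apply Theorem \ref{Thm. (c,1)p}(ii) to write $(g,1)_p=2a_p(g)x_p(g)^2$. Your $j$-invariant computations are correct ($1728$, $8000$, $0$, $-3375$ for the four families), so the CM identifications are sound. Where you diverge is in evaluating $a_p(g)$: you invoke Deuring's theorem and the Hecke Gr\"ossencharakter of the CM curve, whereas the paper simply substitutes $j\mapsto fj$ in the character sum $\sum_j(\frac{j}{p})(\frac{j^2+gj+1}{p})$ to get $(\frac{f}{p})\sum_j(\frac{j}{p})(\frac{dj^2+cj+1}{p})$ and then quotes the explicit evaluations of these sums in \cite[Lemma 3.3]{Dimitry} (essentially Jacobsthal-sum identities), which already come with the correct normalization of $x$.

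The genuine gap is that the content of the conjecture lies precisely in the sign, and your argument stops short of it. You write $a_p(g)=\pm 2x$ and conclude $(c,d)_p=\pm(\frac{f}{p})\,x\,(2x_p(g))^2$, asserting this ``has the predicted shape''---but it does not until the residual factor $(\frac{f}{p})$ (the quartic character of $d$ at $p$, which varies with $p$) is shown to cancel against the corresponding twisting factor hidden in $a_p(g)$. That cancellation is exactly what the paper's substitution $j\mapsto fj$ makes visible: the curve $E_g$ is the quadratic twist by $f$ of $y^2=x(dx^2+cx+1)$, so $a_p(g)=(\frac{f}{p})\cdot(\pm 2x)$ and the two $(\frac{f}{p})$'s annihilate. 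Beyond that, your plan to fix the remaining sign by ``unwinding the Hecke character'' and ``cross-checking with a single numerical value'' is a program, not a proof; determining which quadratic twist of the CM model $E_g$ is, and matching the normalizations $4\mid x-1$, $3\mid x-1$, $(\frac{x}{7})=1$ to the Gr\"ossencharakter, is the hard part and is left undone. The paper's route via the explicit character-sum evaluations of Krachun--Petrov--Sun--Vsemirnov delivers both the cancellation and the sign in one stroke; if you want to keep the CM route, you must carry out the twist identification and sign computation for each of the four orders explicitly rather than deferring it.
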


With the help of (\ref{Eq. introduction}) and Theorem \ref{Thm. (c,1)p} we can confirm these conjectures.
\begin{corollary}\label{Cor. squares and determinants}
Conjecture \ref{Conj. (c,d)p and squares} holds.
\end{corollary}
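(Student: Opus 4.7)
The plan is to reduce each part of Conjecture \ref{Conj. (c,d)p and squares} to a computation of the trace of Frobenius $a_p$ of an explicit elliptic curve with complex multiplication, via Theorem \ref{Thm. (c,1)p} and the reduction identity (\ref{Eq. introduction}). For every pair $(c,d)$ listed in the conjecture, the hypothesis on $p$ forces $(\frac{d}{p})=1$. (In the missing residue classes, e.g.\ $p\equiv 5\pmod 8$ for $(3,2)_p$, we have $(\frac{d}{p})=-1$ and $(c,d)_p=0$ already by Sun's earlier result, so the conjecture holds trivially with $v=0$.) Fix $f\in\Z$ with $f^2\equiv d\pmod p$ and $g\in\Z$ with $g\equiv c/f\pmod p$. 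Since $c^2-4d\ne 0$, also $g^2\not\equiv 4\pmod p$, so Theorem \ref{Thm. (c,1)p}(ii) combined with (\ref{Eq. introduction}) gives
$$(c,d)_p=\left(\frac{f}{p}\right)(g,1)_p=2\left(\frac{f}{p}\right)a_p(g)\,x_p(g)^2$$
for some $x_p(g)\in\Z$, where $a_p(g)=p+1-\#E_{g,1}(\F_p)$.

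The next step is to relate $a_p(g)$ to the global curve $\mathcal E_{c,d}\colon y^2=x^3+cx^2+dx$ defined over $\Q$. The substitution $(x,y)\mapsto(fx,f^{3/2}y)$, carried out over $\F_p(\sqrt{f})$, gives an isomorphism from the reduction of $\mathcal E_{c,d}$ to $E_{g,1}$, so these two curves are quadratic twists by $f$ over $\F_p$. Hence $a_p(g)=(\frac{f}{p})a_p(\mathcal E_{c,d})$, and the previous identity simplifies to the clean formula
$$(c,d)_p=2\,a_p(\mathcal E_{c,d})\,x_p(g)^2.$$
A $j$-invariant computation then identifies each $\mathcal E_{c,d}$ as a CM curve. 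Specifically, $\mathcal E_{3,2}$ is $\Q$-isomorphic to $y^2=x^3-x$ (so $j=1728$, CM by $\Z[i]$); both $\mathcal E_{4,2}$ and $\mathcal E_{8,8}$ have $j=8000$ and CM by $\Z[\sqrt{-2}]$, and in fact $\mathcal E_{8,8}$ is the quadratic twist of $\mathcal E_{4,2}$ by $2$, which is a square mod $p$ when $p\equiv 1\pmod 8$; $\mathcal E_{3,3}$ has $j=0$ and CM by $\Z[e^{2\pi i/3}]$; and $\mathcal E_{21,112}$ has $j=-3375$ and CM by the maximal order of $\Q(\sqrt{-7})$. For each such curve, Deuring's theorem expresses $a_p$ as $\pm 2x$ (or $\pm x$ for non-maximal CM orders) in terms of the representation $p=x^2+Dy^2$ dictated by the CM lattice. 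Substituting back yields $(c,d)_p=\pm x\cdot(2x_p(g))^2$, matching the claimed shape.

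The main obstacle is the careful bookkeeping of signs. The factors $(-1)^{(p-1)/4}$ in (i) and $(-1)^{(p-1)/8}$ in (ii), together with the intrinsic sign of $a_p$ for each CM curve, all depend on the prescribed normalization of $x$ (namely $4\mid x-1$ or $(\frac{x}{7})=1$ as stated). These signs are precisely the evaluations of the CM Hecke character of the relevant elliptic curve at the Frobenius prime above $p$, equivalently quartic or octic Jacobi sums; for each of the four $j$-invariants $\{1728,8000,0,-3375\}$ that occur, such evaluations are classical (going back to Gauss's biquadratic reciprocity for $y^2=x^3-x$, and analogous formulas for the other three orders). One verifies each sign directly, case by case, to complete the proof.
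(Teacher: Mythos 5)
Your proposal is correct and follows essentially the same route as the paper: reduce $(c,d)_p$ to $(g,1)_p$ via (\ref{Eq. introduction}), apply Theorem \ref{Thm. (c,1)p}(ii), and then evaluate the trace $a_p(g)$ of the resulting CM elliptic curve, whose value $\pm 2x$ in terms of the normalized representation $p=x^2+Dy^2$ produces the conjectured factor. The only difference is in the last step: the paper carries out your quadratic-twist argument at the level of character sums (substituting $j\mapsto f_pj$) and then quotes the explicit evaluations of \cite[Lemma 3.3]{Dimitry}, whereas you defer the sign bookkeeping to classical CM/Jacobi-sum formulas --- which is legitimate, but since those signs (e.g.\ $(-1)^{(p-1)/4}$, $(-1)^{(p-1)/8}$) are precisely the content of the conjecture, they should ultimately be pinned down by a concrete reference such as the one the paper uses.
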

For example, if $p=41=5^2+4\times 2^2$, then $(3,2)_{41}/5=17951494350240^2$.
\subsection{The determinants $W_p$ and $Y_p$}
Motivated by Theorem \ref{Thm. (c,1)p}, we also consider some special cases in which $\deg(H)>2$. Observe first that for any integer $k>1$ with $k\mid p-1$, we always have $\det[(\frac{i^k+j^k}{p})]_{1\le i,j\le p-1}=0$. Hence we first consider the following variant.

Let $p\equiv1\pmod4$ be a prime with $p=x_1^2+4y_1^2$ $(x_1,y_1\in\Z$ and $x_1\equiv1\pmod4)$.
Let $0<a_1<a_2<\cdots<a_{\frac{p-1}{4}}<p$ be all the biquadratic residues modulo $p$ in the interval $(0,p)$. We consider the following determinant:
\begin{equation}\label{Eq. definition of Wp biquadratic}
W_p:=\det\bigg[\(\frac{a_i+a_j}{p}\)\bigg]_{1\le i,j\le\frac{p-1}{4}}.
\end{equation}
In addition, if $p\equiv1\pmod8$, then there are integers $x_2,y_2$ with $x_2\equiv 1\pmod4$ such that $p=x_2^2+2y_2^2$ (cf. \cite[Chapter 1]{Cox}). With these notations, we obtain the following result:
\begin{theorem}\label{Thm. biquadratic}
Let $p\equiv1\pmod4$ be a prime. Then the following results hold.

{\rm (i)} If $p\equiv5\pmod8$, then $-2W_p/(1+x_1)$ is an integral square.

{\rm (ii)} If $p\equiv1\pmod8$, then $(-1)^{\frac{p-1}{8}}2W_p/((1+x_1)x_2)$ is an integral square.
\end{theorem}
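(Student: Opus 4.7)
The plan is to exploit the circulant structure of $W_p$ and evaluate the resulting Fourier coefficients via CM elliptic curves and octic Jacobi sums. Since every biquadratic residue is a quadratic residue, one has $(\tfrac{a_i+a_j}{p})=(\tfrac{a_i}{p})(\tfrac{1+a_j a_i^{-1}}{p})=(\tfrac{1+a_j a_i^{-1}}{p})$. Fixing a generator $\omega$ of $H=(\F_p^*)^4$ and relabelling $a_i=\omega^{i-1}$, the matrix defining $W_p$ is circulant with first row $c_k=(\tfrac{1+\omega^k}{p})$, so with $N=(p-1)/4$,
$$W_p=\prod_{\ell=0}^{N-1}S_\ell,\qquad S_\ell=\sum_{k=0}^{N-1}c_k\zeta_N^{k\ell},\qquad \zeta_N=e^{2\pi i/N}.$$
The decisive observation is the palindromic identity $c_k=c_{-k}$, which holds because $(\tfrac{1+\omega^{-k}}{p})=(\tfrac{\omega^{-k}}{p})(\tfrac{1+\omega^{k}}{p})=(\tfrac{1+\omega^{k}}{p})$ given that $\omega^{-k}\in H$ is a quadratic residue. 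Consequently every $S_\ell$ is real (since $\overline{S_\ell}=S_{-\ell}=S_\ell$), and the pairing $\ell\leftrightarrow -\ell$ yields
$$W_p=\begin{cases}S_0\cdot P^2&\text{if }p\equiv 5\pmod 8\text{ (so }N\text{ is odd)},\\ S_0\cdot S_{N/2}\cdot P^2&\text{if }p\equiv 1\pmod 8\text{ (so }N\text{ is even)},\end{cases}$$
where $P=\prod_{1\le\ell<N/2}S_\ell$. The Galois group $\Gal(\Q(\zeta_N)/\Q)$ permutes the unordered pairs $\{\ell,-\ell\}$, so $P$ is a Galois-invariant algebraic integer, hence $P\in\Z$.

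Next I evaluate $S_0$. Parametrising $b=x^4$ gives $4S_0=\sum_{x\in\F_p}(\tfrac{1+x^4}{p})-1$. The affine curve $y^2=1+x^4$ has genus one, and the substitution $X=x^2+y$, $Y=2Xx$ (obtained from the conic parametrisation of $y^2-t^2=1$ with $t=x^2$) birationally identifies it with the elliptic curve $E\colon Y^2=2X^3-2X$. Since $E$ has $j$-invariant $1728$ and complex multiplication by $\Z[i]$, the standard trace-of-Frobenius computation (cf.\ Ireland--Rosen, Ch.~11), with the appropriate sign normalisation and accounting for the $2$-twist when $p\equiv 5\pmod 8$, gives $a_p(E)=2x_1$. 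Using the relation $\sum_{x\in\F_p}(\tfrac{1+x^4}{p})=-1-a_p(E)$ then yields
$$S_0=-\tfrac{1+x_1}{2},$$
which already settles case~(i): $-2W_p/(1+x_1)=P^2$.

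For case~(ii) I also need to evaluate $S_{N/2}=\sum_{b\in H}\chi_2(b)(\tfrac{1+b}{p})$, where $\chi_2$ is the unique order-$2$ character of $H$. Lifting $\chi_2$ to an octic character $\eta_8$ of $\F_p^*$ (possible since $p\equiv 1\pmod 8$) and using the identity $\mathbf{1}_H=\tfrac14\sum_{k=0}^3\eta^{k}$ with $\eta=\eta_8^2$ converts $S_{N/2}$ into a combination of four octic Jacobi sums:
$$S_{N/2}=\tfrac{\eta_8(-1)}{4}\sum_{k=0}^{3}J(\eta_8^{2k+1},\chi).$$
Since $\eta_8(-1)=(-1)^{(p-1)/8}$, and the classical evaluation of octic Jacobi sums in terms of the representations $p=x_1^2+4y_1^2=x_2^2+2y_2^2$ (cf.\ Berndt--Evans--Williams, \emph{Gauss and Jacobi Sums}, Ch.~3) yields $\tfrac14\sum_{k=0}^3 J(\eta_8^{2k+1},\chi)=-x_2$, one obtains $S_{N/2}=-(-1)^{(p-1)/8}x_2$, so $(-1)^{(p-1)/8}\cdot 2W_p/((1+x_1)x_2)=P^2$.

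The main technical obstacle is the octic Jacobi sum identification and the tracking of signs, which requires a careful choice of $\eta_8$ and a case analysis modulo $16$. By contrast, the circulant factorisation, the palindromic reality of the $S_\ell$, the integrality of $P$ from Galois invariance, and the CM evaluation of $S_0$ are all direct.
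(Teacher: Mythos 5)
Your argument has the same skeleton as the paper's proof: pass to the circulant form with first row $c_k=(\tfrac{1+\omega^k}{p})$, use the palindromic symmetry $c_k=c_{-k}$ to see that the Fourier factors $S_\ell$ are real and pair off as $S_\ell=S_{-\ell}$, invoke Galois invariance of $\prod_{1\le \ell<N/2}S_\ell$ to conclude it is a rational integer, and thereby reduce the theorem to evaluating the two exceptional factors $S_0$ and (when $N$ is even) $S_{N/2}$. The paper packages exactly this pairing-plus-Galois step as its Lemma 3.4 and additionally exhibits the $S_\ell$ as eigenvalues of the real symmetric matrix defining $W_p$, but the content is identical to yours. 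Where you diverge is in evaluating the exceptional factors: the paper recognizes $4S_0=\psi_4(1)$ and $4S_{N/2}=\phi_4(1)$ as Jacobsthal sums and quotes Berndt--Evans--Williams (Theorems 6.1.13, 6.2.9, 6.2.3) to get $S_0=-(1+x_1)/2$ and $S_{N/2}=-(-1)^{(p-1)/8}x_2$, whereas you propose to rederive $S_0$ from the CM curve $Y^2=2X^3-2X$ and $S_{N/2}$ from octic Jacobi sums. The target values you state are correct, and your birational substitution for $y^2=1+x^4$ does work, but the steps you yourself flag --- the ``appropriate sign normalisation,'' the quadratic-twist bookkeeping for $p\equiv5\pmod 8$, and the ``case analysis modulo $16$'' for the octic Jacobi sums --- are precisely the content of those classical theorems, and you have not carried them out. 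As written, then, the proposal is a correct and complete reduction of the theorem to two classical character-sum evaluations, which you should either cite (as the paper does, closing the argument immediately) or actually prove; everything else in your outline is sound.
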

For example, (i) when $p=13=(-3)^2+4\times1^2$, we have $a_1=1,a_2=3,a_3=9, W_{13}=4$ and $-2W_{13}/(1+(-3))=4$ is a square.

(ii) When $p=17=1^2+4\times2^2=(-3)^2+2\times2^2$, we have $a_1=1,a_2=4,a_3=13,a_4=16, W_{17}=-3$ and
$(-1)^{\frac{17-1}{8}}2W_{17}/((1+1)(-3))=1$ is a square.

Now we consider another variant which is related to the curve defined by $y^2=x^3+1$ over $\F_p$.

Let $p\equiv1\pmod6$ be a prime, and let $0<c_1<\cdots<c_{\frac{p-1}{6}}<p$ be all the $6$-th power residues modulo $p$ in the interval $(0,p)$. We consider the determinant:
\begin{equation}\label{Eq. definition of Yp six}
Y_p:=\det\bigg[\(\frac{c_i+c_j}{p}\)\bigg]_{1\le i,j\le\frac{p-1}{6}}.
\end{equation}
It is known that there are integers $x_3,y_3$ with $x_3\equiv1\pmod3$ such that $p=x_3^2+3y_3^2$. In addition, if $p\equiv1\pmod{12}$, then we write $p=x_4^2+4y_4^2$ with $x_4,y_4\in\Z$ and $x_4\equiv (\frac{2}{p})\pmod 4$. With these notations, we obtain the following result:
\begin{theorem}\label{Thm. six}
Let $p\equiv1\pmod6$ be a prime. Then the following results hold.

{\rm (i)} If $p\equiv7\pmod{12}$, then $-3Y_p/(1+2x_3)$ is an integral square.

{\rm (ii)} If $p\equiv1\pmod{12}$, then $(-1)^{\frac{p+3}{4}}3Y_p=(1+2x_3)x_4\delta_pz_p^2$ for some integer $z_p$, where
$$\delta_p:=\begin{cases}1/3&\mbox{if}\ 3\mid x_4,\\-1&\mbox{if}\ 3\nmid x_4.\end{cases}$$
\end{theorem}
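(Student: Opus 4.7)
The plan mirrors the proof of Theorem 1.5. Let $n=(p-1)/6$, $G=\F_p^*$, $\chi_2=(\frac{\cdot}{p})$, and let $H\subseteq G$ be the subgroup of sixth-power residues, so that $G/H$ is cyclic of order $6$ and $c_1,\ldots,c_n$ enumerate $H$. First, for each $\psi\in\widehat{H}$ I would verify that $v_\psi=(\psi(c_i))_{1\le i\le n}$ is an eigenvector of $M=[(\frac{c_i+c_j}{p})]$: fixing an extension $\tilde\psi$ of $\psi$ to $G$ and using $\sum_{\tau\in\widehat{G/H}}\tau(b)=6\cdot\mathbf{1}_H(b)$, together with the substitution $b=-c_i t$ in the inner sum, one obtains
\begin{equation*}
\lambda_\psi \;=\; \frac{1}{6}\sum_{\tau\in\widehat{G/H}} (\tilde\psi\tau)(-1)\,S(\tilde\psi\tau),
\end{equation*}
where $S(\chi)=J(\chi,\chi_2)$ is the Jacobi sum for $\chi\notin\{1,\chi_2\}$ and is a simple explicit constant in the two exceptional cases. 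Hence $Y_p=\prod_{\psi}\lambda_\psi$.

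Next I would pair complex-conjugate characters: for $\psi\ne\bar\psi$ one has $\lambda_\psi\lambda_{\bar\psi}=|\lambda_\psi|^2$. Using $|J(\chi,\chi_2)|^2=p$ whenever $\chi\chi_2\ne 1$ and grouping the product over full Galois orbits, the total paired contribution collapses into a square in $\Z$ (times a controlled power of $p$ that is itself a square). Thus the ``non-square'' content of $Y_p$ is concentrated in the self-conjugate eigenvalues, which come from $\psi_0$ always, and additionally from the unique order-$2$ character $\psi_2$ exactly when $n$ is even, i.e.\ $p\equiv 1\pmod{12}$. This parity precisely accounts for the case division in the statement.

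For case (i), the six extensions of $\psi_0$ are $\widehat{G/H}=\{1,\chi_6,\chi_3,\chi_2,\bar\chi_3,\bar\chi_6\}$. The two exceptional terms contribute a constant ($-2/6$), and the remaining four Jacobi sums collapse into $2\Re J(\chi_3,\chi_2)\pm 2\Re J(\chi_6,\chi_2)$ according to $\chi_6(-1)$ (which is $-1$ here since $\chi_6^3=\chi_2$ and $p\equiv 3\pmod 4$). The classical evaluation $J(\chi_3,\chi_3)=a+b\omega$ with $p=a^2-ab+b^2$, normalized so that $p=x_3^2+3y_3^2$ with $x_3\equiv 1\pmod 3$, combined with the standard identities relating $J(\chi_6,\chi_2)$ to $J(\chi_3,\chi_3)$, is expected to yield $\lambda_{\psi_0}=-(1+2x_3)/3$. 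Together with the squared paired contribution this proves (i).

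For case (ii), both $\psi_0$ and $\psi_2$ are self-conjugate. The eigenvalue $\lambda_{\psi_0}$ again produces a factor proportional to $1+2x_3$. The six extensions of $\psi_2$ are the odd powers of a fixed order-$12$ character: $\chi_4,\bar\chi_4$, and four order-$12$ characters. The Jacobi sum $J(\chi_4,\chi_2)$ is $\pm(x_4+2y_4\sqrt{-1})$ (with $x_4\equiv(\frac{2}{p})\pmod 4$), producing the factor $x_4$, while the sign $(-1)^{(p+3)/4}$ is tracked via $\chi_4(-1)=(-1)^{(p-1)/4}$ combined with the pairing. The four order-$12$ Jacobi sums lie in $\Q(\zeta_{12})$ and their Galois-invariant contribution produces the factor $\delta_p\in\{1/3,-1\}$ depending on whether $3\mid x_4$. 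I expect this $\delta_p$-dichotomy to be the main obstacle: it requires a careful evaluation of $J(\chi_{12},\chi_2)$ (via a Davenport--Hasse or Gross--Koblitz type identity) and separating the two sub-cases according to the $3$-adic behaviour of $x_4+2y_4\sqrt{-1}$; this subtlety has no counterpart in Theorem 1.5 and is what makes the sextic case strictly harder than the biquadratic one.
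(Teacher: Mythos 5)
Your overall strategy---diagonalizing $Y_p$ by the characters of the subgroup of sixth-power residues, pairing conjugate eigenvalues, and isolating the self-conjugate eigenvalues $\lambda_{\psi_0}$ (always) and $\lambda_{\psi_2}$ (when $p\equiv1\pmod{12}$)---is exactly the paper's, and your identifications $\lambda_{\psi_0}=-(1+2x_3)/3$ and the $\delta_p$ dichotomy coming from $\lambda_{\psi_2}$ are correct. You can, however, spare yourself the Jacobi-sum computations for these two eigenvalues: they are precisely the Jacobsthal sums $\psi_6(1)/6$ and $\phi_6(1)/6$, and the evaluations you need (including the split according to whether $3\mid x_4$) are classical and quoted by the paper from Berndt--Evans--Williams; no Davenport--Hasse or Gross--Koblitz analysis is required.

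The genuine gap is the assertion that the paired contribution ``collapses into a square in $\Z$ (times a controlled power of $p$ that is itself a square).'' First, no power of $p$ appears: each $\lambda_\psi$ is an average of six Jacobi sums, so $|\lambda_\psi|^2\ne p$ in general. More seriously, writing the paired product as $\prod_{\rm pairs}|\lambda_\psi|^2=|P|^2$ with $P=\prod_{\rm reps}\lambda_\psi$ an algebraic integer only shows that it is a nonnegative rational integer (the full product is Galois-stable), not that it is a perfect square: compare $|1+i|^2=2$. What must be shown is that the product over one representative of each conjugate pair is itself a rational integer, and your sketch contains no argument for this. The paper supplies it via Lemma 3.4: writing $Y_p=\det C(t_0,\dots,t_{m-1})$ with $t_i=(\frac{\xi^{6i}+1}{p})$ and $m=(p-1)/6$, the symmetry $t_i=t_{m-i}$ forces $R_j=R_{m-j}$ for the circulant factors $R_j=\sum_i\zeta_m^{ij}t_i$, so every $\sigma_g\in\Gal(\Q(\zeta_m)/\Q)$ permutes $R_1,\dots,R_{\lfloor m/2\rfloor -1}$ (indices taken modulo $m$ up to sign) and hence fixes their product, which is therefore a rational integer whose square is the remaining factor $\Lambda$. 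Equivalently, in your language one should use that $R_p$ is real symmetric, so that $\lambda_{\bar\psi}=\overline{\lambda_\psi}=\lambda_\psi$ holds as an exact equality of real numbers and the Galois group permutes the multiset of representative eigenvalues; one of these integrality arguments must be made explicit before the theorem is proved.
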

For instance, (i) when $p=31=(-2)^2+3\times3^2$, we have $c_1=1,c_2=2,c_3=4,c_4=8,c_5=16,Y_{31}=16$ and $-3Y_{31}/(1+2\times(-2))=16$.

(ii) when $p=13=1^2+3\times2^2=3^2+4\times1^2$, we have $c_1=1,c_2=12, Y_{13}=1$ and
$(-1)^{\frac{13+3}{4}}\times3\times1=(1+2\times1)\times3\times\frac{1}{3}\times1^2$.

The proof of Theorem \ref{Thm. triangular numbers} will be given in Section 2. We will prove Theorem \ref{Thm. [c,d]p}--\ref{Thm. (c,1)p} in Section 3. Finally, the proofs of Theorems \ref{Thm. biquadratic}--\ref{Thm. six} will be given in Section 4.
\maketitle
\section{Proof of Theorem \ref{Thm. triangular numbers}}
\setcounter{lemma}{0}
\setcounter{theorem}{0}
\setcounter{corollary}{0}
\setcounter{remark}{0}
\setcounter{equation}{0}
\setcounter{conjecture}{0}
We first introduce some notations. Let $X_1,\cdots,X_m$ be variables. For each $1\le k\le m$, the $k$-th elementary symmetric polynomial $\sigma_k$ with respect to $X_1,\cdots,X_m$ is defined by
$$\sigma_k(X_1,\cdots,X_m):=\sum_{1\le i_1<\cdots<i_k\le m}\prod_{j=1}^kX_{i_j}.$$
Also, we let $\sigma_0(X_1,\cdots,X_m)=1$.

Let $m$ be a positive integer, and let $x_1,\cdots,x_m, y_1,\cdots,y_m$ be complex numbers. It is known that (cf. \cite[Lemma 9]{Kratt}) for a complex polynomial $P(X)=a_{m-1}X^{m-1}+\cdots+a_0$ we have
$$\det[P(x_i+y_j)]_{1\le i,j\le m}
=a_{m-1}^m\prod_{k=0}^{m-1}\binom{m-1}{k}\times\prod_{1\le i<j\le m}(x_i-x_j)(y_j-y_i).$$
Grinberg, Sun and Zhao \cite[Theorem 3.1]{Gringerg} studied a variant of this result.
Let $G_m:=\det[(x_i+y_j)^m]_{1\le i,j\le m}$. Grinberg, Sun and Zhao \cite[Theorem 3.1]{Gringerg} proved the following useful result:
\begin{lemma}\label{Lem. Grinberg}{\rm(Grinberg, Sun and Zhao)}
$$G_m=(-1)^{\frac{m(m-1)}{2}}\times\prod_{1\le i<j\le m}(x_j-x_i)(y_j-y_i)\times\prod_{r=0}^m\binom{m}{r}\times S_m,$$
where
$$S_m:=\sum_{k=0}^m\frac{\sigma_k(x_1,\cdots,x_m)\sigma_{m-k}(y_1,\cdots,y_m)}{\binom{m}{k}}.$$
\end{lemma}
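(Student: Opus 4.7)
The plan is to exploit the binomial expansion
\[
(x_i+y_j)^m=\sum_{k=0}^{m}\binom{m}{k}x_i^{k}y_j^{m-k},
\]
which realizes the matrix $[(x_i+y_j)^m]_{1\le i,j\le m}$ as the product of the $m\times(m+1)$ matrix $B$ with entries $B_{i,k}=\binom{m}{k}x_i^{k}$ and the $(m+1)\times m$ matrix $C$ with entries $C_{k,j}=y_j^{m-k}$, where $k$ runs over $\{0,1,\ldots,m\}$. The Cauchy--Binet formula then gives
\[
G_m=\sum_{r=0}^{m}\det(B_{\hat r})\det(C_{\hat r}),
\]
where $B_{\hat r}$ (resp.\ $C_{\hat r}$) denotes $B$ (resp.\ $C$) with its $r$-th column (resp.\ row) deleted, since the $m$-element subsets of $\{0,1,\ldots,m\}$ are parametrized by the omitted index $r$.

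Next, I would evaluate each of these $m\times m$ minors. Pulling out the binomial coefficients,
\[
\det(B_{\hat r})=\prod_{\substack{0\le k\le m\\ k\ne r}}\binom{m}{k}\cdot\det\bigl[x_i^{k}\bigr]_{1\le i\le m,\ k\in\{0,\ldots,m\}\setminus\{r\}}.
\]
The latter ``almost-Vandermonde'' determinant, whose column exponents are $0,1,\ldots,m$ with the single value $r$ omitted, equals $\sigma_{m-r}(x_1,\ldots,x_m)\prod_{1\le i<j\le m}(x_j-x_i)$; this is the Jacobi bialternant formula applied to the hook partition $(1^{m-r})$, and can also be checked directly via the identity $\prod_{\ell}(T-x_\ell)=\sum_{s=0}^m(-1)^{s}\sigma_s(x)T^{m-s}$. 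By the same identity, after reversing the rows of $C_{\hat r}$ to put the row exponents $m-k$ in increasing order (which contributes a sign $(-1)^{m(m-1)/2}$), one obtains
\[
\det(C_{\hat r})=(-1)^{m(m-1)/2}\,\sigma_{r}(y_1,\ldots,y_m)\prod_{1\le i<j\le m}(y_j-y_i).
\]

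Finally, I would substitute these two evaluations into the Cauchy--Binet sum and factor out the common sign $(-1)^{m(m-1)/2}$, the two Vandermonde products, and the full binomial product $\prod_{r=0}^{m}\binom{m}{r}$ (using $\prod_{k\ne r}\binom{m}{k}=\prod_{r=0}^{m}\binom{m}{r}/\binom{m}{r}$). What remains is
\[
\sum_{r=0}^{m}\frac{\sigma_{m-r}(x_1,\ldots,x_m)\,\sigma_{r}(y_1,\ldots,y_m)}{\binom{m}{r}},
\]
and the reindexing $k=m-r$, combined with $\binom{m}{m-k}=\binom{m}{k}$, identifies this sum with $S_m$, giving the claimed identity. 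The main obstacle is the sign and index bookkeeping: the factor $(-1)^{m(m-1)/2}$ from the row reversal in $\det(C_{\hat r})$, and the correct identification of the almost-Vandermonde with $\sigma_{m-r}$ (rather than $\sigma_r$ or a complete-homogeneous polynomial), are both easy to miscompute. Once these are pinned down, the remainder of the argument is purely mechanical.
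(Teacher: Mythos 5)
Your argument is correct, but note that the paper does not prove this lemma at all: it is quoted verbatim from Grinberg, Sun and Zhao \cite[Theorem 3.1]{Gringerg}, so you have supplied a self-contained proof where the paper only gives a citation. Your route — writing $(x_i+y_j)^m=\sum_{k=0}^m\binom{m}{k}x_i^ky_j^{m-k}$ to factor the matrix as an $m\times(m+1)$ times an $(m+1)\times m$ product, applying Cauchy--Binet over the $m+1$ subsets obtained by deleting one index $r$, and evaluating each minor by the bialternant/hook identity $\det[x_i^{k}]_{k\in\{0,\dots,m\}\setminus\{r\}}=\sigma_{m-r}(x_1,\dots,x_m)\prod_{i<j}(x_j-x_i)$ — is the standard and essentially the original proof of this identity. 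The three delicate points you flagged all check out: reversing the $m$ rows of $C_{\hat r}$ contributes $(-1)^{m(m-1)/2}$; the omitted row exponent of $C_{\hat r}$ is $m-r$, so its minor carries $\sigma_{m-(m-r)}=\sigma_r(y_1,\dots,y_m)$; and $\prod_{k\ne r}\binom{m}{k}=\prod_{k=0}^m\binom{m}{k}/\binom{m}{r}$ together with the reindexing $k=m-r$ and $\binom{m}{m-k}=\binom{m}{k}$ produces exactly $S_m$. (A quick sanity check at $m=2$, where $G_2=-(x_2-x_1)(y_2-y_1)\bigl(2\sigma_2(x)+\sigma_1(x)\sigma_1(y)+2\sigma_2(y)\bigr)$, confirms the normalization.) The only thing I would add for completeness is a one-line justification of the almost-Vandermonde evaluation, e.g.\ by expanding $\det[x_i^{k}]_{0\le k\le m}$ of the $(m+1)\times(m+1)$ Vandermonde in $x_1,\dots,x_m,T$ along the row of powers of $T$ and comparing with $\prod_{\ell}(T-x_\ell)\prod_{i<j}(x_j-x_i)$, exactly as you indicate.
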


We also need the following result.
\begin{lemma}\label{Lem. congruence of polynomials}
Let $p=2n+1$ be an odd prime. Then we have
$$\prod_{r=1}^n(X-r^2-r)\equiv\frac{(4X+1)^n-1}{4X}(4X+1)\pmod {p\Z[X]}.$$
Moreover, we have
$$\sigma_n(1^2+1,\cdots,n^2+n)\equiv (-1)^nn\pmod p,$$
and
$$\sigma_k(1^2+1,\cdots,n^2+n)\equiv (-1)^k4^{n-k}\binom{n+1}{k}\pmod p$$
for $0\le k\le n-1$.
\end{lemma}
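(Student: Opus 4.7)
The plan is to reduce the product $\prod_{r=1}^n(X - r^2 - r)$ to one of the form $\prod_y (Z - y^2)$ by the substitution $Z = 4X+1$, which is suggested by the identity $4(X - r^2 - r) = (4X+1) - (2r+1)^2$. Clearing the factor $4^n$, we have
$$4^n \prod_{r=1}^n(X - r^2 - r) = \prod_{r=1}^n\bigl((4X+1) - (2r+1)^2\bigr).$$
As $r$ runs through $1, 2, \ldots, n$, the value $2r+1$ runs through $3, 5, \ldots, 2n-1, 2n+1$; the last equals $p$ and so vanishes modulo $p$, contributing a factor $4X+1$. The remaining values $3, 5, \ldots, p-2$ are exactly the odd residues modulo $p$ other than $1$.

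The key observation is that squaring gives a bijection from the odd residues $\{1, 3, \ldots, p-2\}$ onto the nonzero quadratic residues modulo $p$: each nonzero QR has exactly two square roots $\pm s$, and precisely one of them is odd because their sum $p$ is odd. By Euler's criterion the nonzero QRs are exactly the roots of $Z^n - 1$ in $\F_p$, so $\prod_{y \text{ odd}}(Z - y^2) \equiv Z^n - 1 \pmod p$. Dividing out the $y = 1$ factor yields $\prod_{y \in \{3, 5, \ldots, p-2\}}(Z - y^2) \equiv (Z^n - 1)/(Z - 1) \pmod p$. Substituting $Z = 4X+1$ and combining with the $r = n$ factor gives
$$4^n \prod_{r=1}^n(X - r^2 - r) \equiv (4X+1)\cdot\frac{(4X+1)^n - 1}{4X} \pmod{p\Z[X]}.$$
Since Fermat gives $4^n = 2^{p-1} \equiv 1 \pmod p$, this yields the first assertion.

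For the symmetric function congruences I expand the right-hand side explicitly. Writing $(4X+1)^n - 1 = \sum_{j=1}^n \binom{n}{j}(4X)^j$ and dividing by $4X$ gives $\sum_{i=0}^{n-1}\binom{n}{i+1}4^i X^i$; multiplying by $4X+1$ and collecting coefficients via Pascal's rule $\binom{n}{k}+\binom{n}{k+1} = \binom{n+1}{k+1}$ produces a clean expression for each coefficient in terms of $4^k$ and $\binom{n+1}{k+1}$. Comparing with $\prod_{r=1}^n(X-r^2-r) = \sum_{k=0}^n (-1)^k \sigma_k X^{n-k}$ and again invoking $4^n \equiv 1$, I read off $\sigma_k \equiv (-1)^k 4^{n-k}\binom{n+1}{k} \pmod p$ for $0 \le k \le n-1$ (from the coefficients of $X^{n-k}$ with $1 \le n-k \le n-1$), together with $\sigma_n \equiv (-1)^n n \pmod p$ from the constant term. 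I expect the only delicate point to be the bijection between odd residues and nonzero QRs; once that is in hand, the fact that $4^n \equiv 1 \pmod p$ absorbs all the normalization and the rest is bookkeeping with the binomial expansion.
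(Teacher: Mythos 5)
Your proof is correct. The second half (reading off the $\sigma_k$ by expanding $\frac{(4X+1)^{n+1}-(4X+1)}{4X}$ and comparing coefficients, using $4^n\equiv 1$) is essentially identical to the paper's. The first half takes a genuinely different, though closely related, route: the paper observes that both sides are degree-$n$ polynomials over $\F_p$ with the same leading coefficient, checks that the $n$ values $r^2+r$ are pairwise distinct mod $p$, and verifies directly that each is a root of $\frac{(4X+1)^n-1}{4X}(4X+1)$ (again via $4(r^2+r)+1=(2r+1)^2$ and Fermat), so the two polynomials coincide. You instead derive the identity constructively, rewriting the product as $\prod_r\bigl(Z-(2r+1)^2\bigr)$ with $Z=4X+1$ and invoking the bijection between odd residues and nonzero quadratic residues together with the factorization $Z^n-1=\prod_{a\in\mathrm{QR}}(Z-a)$. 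The trade-off is minor: your argument needs the odd-residue/QR bijection but avoids the distinctness check and the root-counting step; the paper's needs distinctness of the $r^2+r$ but no bijection. One cosmetic point: for $k=0$ the relevant coefficient is that of $X^n$ (not of $X^{n-k}$ with $1\le n-k\le n-1$ as you wrote), but that case is immediate since $\sigma_0=1\equiv 4^n\binom{n+1}{0}\pmod p$, and your expansion does produce the leading coefficient $4^n$.
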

\begin{proof}
For convenience, we let
$$F(X):=\prod_{r=1}^n(X-r^2-r)\ \mo p\Z[X],$$
and let
$$G(X):=\frac{(4X+1)^n-1}{4X}(4X+1)\ \mo p\Z[X].$$
Clearly we can view $F(X)$ and $G(X)$ as two polynomials in $(\Z/p\Z)[X]=\F_p[X]$. Since $\deg(F)=\deg(G)=n$ and $F,G$ have the same leading coefficients (because $4^n=2^{p-1}\equiv1\pmod p$), it suffices to show that $r^2+r\ \mo p$ $(r=1,\cdots,n)$ are exactly all roots of $G(X)$. One can easily verify that $1^2+1\ \mo p,\cdots, n^2+n\ \mo p$ are distinct and $G(r^2+r\ \mo p)=0\ \mo p$ for each $r=1,\cdots,n$. Hence we obtain
\begin{equation}\label{Eq. congruence of polynomial}
\prod_{r=1}^n(X-r^2-r)\equiv\frac{(4X+1)^n-1}{4X}(4X+1)\pmod {p\Z[X]}.
\end{equation}
Note that
$$\prod_{r=1}^n(X-r^2-r)=\sum_{k=0}^n(-1)^k\sigma_k(1^2+1,\cdots,n^2+n)X^{n-k}$$
and
\begin{align*}
\frac{(4X+1)^n-1}{4X}(4X+1)&=\frac{1}{4X}\((4X+1)^{n+1}-(4X+1)\)\\
&=\sum_{k=0}^{n-1}\binom{n+1}{k}(4X)^{n-k}+(n+1)-1\\
&=(4X)^n+\sum_{k=1}^{n-1}\binom{n+1}{k}(4X)^{n-k}+n.
\end{align*}
By (\ref{Eq. congruence of polynomial}) one can get the desired result.
\end{proof}
We are now in a position to prove Theorem \ref{Thm. triangular numbers}.

{\bf Proof of Theorem \ref{Thm. triangular numbers}.} In this proof, we assume that $p>3$ and $p=2n+1$. Euler's criterion says that $(\frac{a}{p})\equiv a^n\pmod p$ for any $a\in\Z$. Thus,
$$T_p\equiv \det[(i^2+i+j^2+j)^n]_{1\le i,j\le n}\pmod p.$$
By Lemma \ref{Lem. Grinberg} we thus have
\begin{equation*}
T_p\equiv (-1)^{\frac{n(n-1)}{2}}\times\prod_{1\le i<j\le n}(j^2+j-i^2-i)^2\times\prod_{r=0}^n\binom{n}{r}\times S_n\pmod p,
\end{equation*}
where
$$S_n=\sum_{k=0}^n\frac{\sigma_k(1^2+1,\cdots,n^2+n)\sigma_{n-k}(1^2+1,\cdots,n^2+n)}{\binom{n}{k}}.$$
Hence we obtain
\begin{equation}\label{Eq. A in Thm. triangular numbers}
\(\frac{T_p}{p}\)
=\(\frac{-1}{p}\)^{n(n-1)/2}\cdot\(\frac{\prod_{r=0}^n\binom{n}{r}}{p}\)\cdot\(\frac{S_n}{p}\).
\end{equation}
We first consider $S_n\ \mo p$. By Lemma \ref{Lem. congruence of polynomials} we have
\begin{align*}
S_n&\equiv (-1)^n2n+\sum_{k=1}^{n-1}(-1)^n4^n\binom{n+1}{k}\binom{n+1}{n-k}\bigg/\binom{n}{k}\\
&\equiv (-1)^{n+1}+(-1)^n\sum_{k=1}^{n-1}\binom{n+1}{k}\binom{n+1}{k+1}\bigg/\binom{n}{k}\\
&\equiv (-1)^{n+1}+(-1)^n\sum_{k=1}^{n-1}\frac{n+1}{k+1}\binom{n+1}{k}\pmod p.
\end{align*}
The last congruence follows from
$$\binom{n+1}{k+1}=\frac{n+1}{k+1}\binom{n}{k}.$$
By the identity:
$$\sum_{k=0}^{n+1}\frac{1}{k+1}\binom{n+1}{k}=\frac{2^{n+2}-1}{n+2},$$
we obtain
\begin{align*}
S_n\equiv (-1)^{n+1}+(-1)^n(n+1)\(\frac{2^{n+2}-1}{n+2}-1-1-\frac{1}{n+2}\)\pmod p.
\end{align*}
From this one can verify that
\begin{equation}\label{Eq. B in Thm. triangular numbers}
S_n\equiv\begin{cases}4\times(-1)^{n+1}\pmod p&\mbox{if}\ p\equiv\pm3\pmod8,\\\frac{4}{3}\times(-1)^{n+1}\pmod p&\mbox{if}\ p\equiv
\pm1\pmod8.\end{cases}
\end{equation}
Now we consider the product $\prod_{r=0}^{n}\binom{n}{r}$. It is easy to see that
$$\prod_{r=0}^{n}\binom{n}{r}=\frac{n!^{n+1}}{(0!1!\cdots n!)^2}.$$
Moreover, Sun \cite[Lemma 2.3]{Sun2019det} showed that $(\frac{n!}{p})=(\frac{2}{p})$ if $n$ is even.
Thus
\begin{equation}\label{Eq. C in Thm. triangular numbers}
\(\frac{\prod_{r=0}^{n}\binom{n}{r}}{p}\)=\(\frac{n!}{p}\)^{n+1}=\begin{cases}1&\mbox{if}\ p\equiv 3\pmod4,\\(\frac{2}{p})&\mbox{if}\ p\equiv
1\pmod4.\end{cases}
\end{equation}
In view of (\ref{Eq. A in Thm. triangular numbers})--(\ref{Eq. C in Thm. triangular numbers}), one can easily get the desired result.\qed
\maketitle
\section{Proofs of Theorems \ref{Thm. [c,d]p}--\ref{Thm. (c,1)p}}
\setcounter{lemma}{0}
\setcounter{theorem}{0}
\setcounter{corollary}{0}
\setcounter{remark}{0}
\setcounter{equation}{0}
\setcounter{conjecture}{0}
We first give a brief introduction on elliptic curves over finite fields. Readers may refer to \cite[Chapter V]{Silverman} for details. Let $p>3$ be a prime, and let $\F_p$ be the finite field with $p$ elements. Let $E$ be an elliptic curve over $\F_p$ defined by the equation
$$y^2=f(x),$$
where $f(x)\in\F_p[x]$ with $\deg(f)=3$. We set
$$E(\F_p):=\{(x,y)\in\F_p\times\F_p: y^2=f(x)\}\cup\{\infty\}.$$
As mentioned in the introduction, we set
\begin{equation}\label{Eq. ap}
\#E(\F_p)=p+1-a_p.
\end{equation}
It is clear that
$$a_p=-\sum_{x=0}^{p-1}\(\frac{f(x)}{p}\).$$
In addition, we say that $E$ is {\it supersingular} if the endomorphism ring $\End(E)$ is an order in a quaternion algebra (readers may refer to \cite[pp. 144-145]{Silverman} for details). There are many known criteria for an elliptic curve over finite field to be supersingular (cf. \cite[Table 2, p. 269]{Husemoller}). For our purpose, we need the following result.

\begin{lemma}\label{Lem. criterion for supersingular}
Let $p>3$ be a prime, and let $E$ be an elliptic curve over $\F_p$. Then $E$ is supersingular if and only if $\#E(\F_p)=p+1$, i.e., $a_p=0$.
\end{lemma}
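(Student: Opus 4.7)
The plan is to combine two standard ingredients from the arithmetic of elliptic curves over finite fields: the Hasse bound $|a_p|\le 2\sqrt{p}$ and the classical Deuring criterion, which asserts that $E$ is supersingular if and only if $a_p\equiv 0\pmod p$ (see for instance \cite[Chapter V]{Silverman}). Given these two facts, the lemma drops out after a short argument that uses the hypothesis $p>3$ to rule out nonzero multiples of $p$ in the Hasse interval.

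First I would dispose of the easy direction: if $\#E(\F_p)=p+1$, then $a_p=0$, so in particular $p\mid a_p$, and the Deuring criterion immediately yields supersingularity. For the converse, suppose $E$ is supersingular; then the Deuring criterion gives $a_p\equiv 0\pmod p$. Since $p>3$ we have $p\ge 5$, and hence $2\sqrt{p}<p$, so the Hasse bound forces $a_p\in(-p,p)$. The only multiple of $p$ in this interval is $0$, so $a_p=0$ and $\#E(\F_p)=p+1$.

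The main obstacle is providing or citing the congruence criterion ``$p\mid a_p$ iff $E$ is supersingular''. A self-contained route proceeds through the Hasse invariant: writing $E:y^2=f(x)$ with $\deg f=3$, one shows that the coefficient of $x^{p-1}$ in $f(x)^{(p-1)/2}$ is congruent to $-a_p$ modulo $p$ (essentially the computation already carried out in Remark \ref{Remark A} of the present paper, applied to a cubic rather than to $x(dx^2+cx+1)$), and then invokes the standard fact that $E$ is supersingular precisely when this Hasse invariant vanishes in $\F_p$. Alternatively, one uses that the Frobenius endomorphism $\pi$ satisfies $\pi^2-a_p\pi+p=0$ in $\End(E)$, and that $E$ is ordinary iff the dual isogeny $\hat{\pi}=a_p-\pi$ is separable, which amounts to $p\nmid a_p$. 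Both approaches are available in the standard references \cite{Silverman} and \cite{Husemoller} (in particular the table on p.~269 of the latter), so once they are cited the lemma itself is immediate.
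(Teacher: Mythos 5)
Your argument is correct: the paper states this lemma without proof, simply citing Husem\"oller's table of supersingularity criteria, and your derivation (Deuring's criterion ``$E$ supersingular iff $p\mid a_p$'' combined with the Hasse bound $|a_p|\le 2\sqrt{p}<p$, valid since $p>3$) is exactly the standard argument underlying that citation. The only blemish is a harmless sign slip in your aside on the Hasse invariant: with the convention $\#E(\F_p)=p+1-a_p$, the coefficient of $x^{p-1}$ in $f(x)^{(p-1)/2}$ is congruent to $+a_p$, not $-a_p$, modulo $p$ (compare the computation in Remark \ref{Remark A}), which of course does not affect the vanishing criterion.
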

There are many works on supersingular elliptic curves. For example, Elkies \cite{Elkies} showed the following elegant result:
\begin{lemma}\label{Lem. infinitely many supersingular}{\rm (Elkies)}
For any degree-$3$ polynomial $f\in\Z[x]$ without multiple roots, there exist infinitely many primes $p$ such that the curve over $\F_p$ defined by $y^2=f(x)$ is a supersingular elliptic curve over $\F_p$.
\end{lemma}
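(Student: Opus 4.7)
The plan is to prove this Elkies-type result via the theory of complex multiplication and Deuring's reduction theorem. Let $E$ denote the elliptic curve $y^2=f(x)$ over $\Q$, which has a well-defined $j$-invariant $j_E\in\Q$ because $f$ has no multiple roots. By Lemma~\ref{Lem. criterion for supersingular} it suffices to find infinitely many rational primes $p$ of good reduction for $E$ such that the reduction $\ov{E}:=E\bmod p$ is supersingular over $\F_p$, equivalently such that $j_E\bmod p$ is a supersingular $j$-invariant in $\ov{\F_p}$.

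The main external input is Deuring's theorem: for any imaginary quadratic field $K=\Q(\sqrt{-D})$ with Hilbert class polynomial $H_{-D}(X)\in\Z[X]$, if a rational prime $p$ is inert in $K$, then the reduction modulo any prime $\mathfrak{p}$ of $\ov{\Q}$ above $p$ of every root of $H_{-D}$ is a supersingular $j$-invariant in characteristic $p$. Therefore, to produce a supersingular prime for $E$, it is enough to find a pair $(p,D)$ satisfying the two conditions
\begin{equation*}
p\mid H_{-D}(j_E)\qquad\text{and}\qquad\left(\frac{-D}{p}\right)=-1.
\end{equation*}
The first condition guarantees that $j_E$ reduces modulo some prime above $p$ to a root of $H_{-D}\bmod p$, while the second upgrades this reduction to a supersingular $j$-invariant.

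The strategy is to vary $D$ over a family of fundamental discriminants cut out by congruences chosen so that, by quadratic reciprocity, every prime divisor of $H_{-D}(j_E)$ outside a fixed finite exceptional set is automatically inert in $\Q(\sqrt{-D})$. The archimedean estimate $\log|H_{-D}(j_E)|\sim\tfrac{1}{2}h(-D)\pi\sqrt{D}$, coming from $|j(\tau)|\sim e^{2\pi\,\t{Im}\,\tau}$ as $\t{Im}\,\tau\to\infty$, shows that $|H_{-D}(j_E)|$ grows super-polynomially in $D$. Hence for $D$ sufficiently large the integer $H_{-D}(j_E)$ must have a prime divisor $p$ outside the controlled exceptional set. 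Any such $p$ is then inert in $K$, hence supersingular for $E$, and because the discriminants $-D$ can be chosen from infinitely many residue classes producing distinct $p$, one obtains infinitely many supersingular primes.

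\textbf{Main obstacle.} The delicate point is the sieve forcing every generic prime divisor of $H_{-D}(j_E)$ to be inert in $\Q(\sqrt{-D})$. This requires finer arithmetic information on the factorizations of differences of singular moduli (the Gross--Zagier formula), which pins down the primes $p$ with $H_{-D}(j_E)\equiv 0\pmod p$ and relates their splitting in $K$ to quadratic residuosity data on $-D$. The careful execution of this sieve is the substance of Elkies' paper \cite{Elkies}, which we invoke as a black box.
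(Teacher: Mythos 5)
The paper gives no proof of this lemma at all: it is stated verbatim as Elkies' theorem and the proof is the citation to \cite{Elkies}. Your proposal ultimately does the same thing — you invoke Elkies' sieve as a black box — so the two coincide in substance; your surrounding sketch (Deuring reduction, and the criterion that $p\mid H_{-D}(j_E)$ together with $\left(\frac{-D}{p}\right)=-1$ forces supersingular reduction) is a fair account of the strategy, but the step you describe as choosing $D$ in congruence classes so that ``every generic prime divisor of $H_{-D}(j_E)$ is automatically inert'' is not how Elkies argues, nor could it work as stated, since inertness of $p$ in $\Q(\sqrt{-D})$ depends on $p$ modulo $D$ and cannot be imposed on the unknown prime divisors of $H_{-D}(j_E)$ by conditions on $D$ alone; what Elkies actually shows is that an \emph{ordinary} prime divisor $p$ of $H_{-D}(j_E)$ must split in $\Q(\sqrt{-D})$ and satisfy $4p\ge D$, and he reaches a contradiction via sign and quadratic-reciprocity considerations on $\Phi_\ell(j_E,j_E)$ for well-chosen $\ell$, with no appeal to Gross--Zagier.
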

We also need the following result.

Let $p$ be an odd prime, and let $\F_p=\Z/p\Z=\{0\ \mo p,\cdots,p-1\ \mo p\}$. Given any $a,b\in\Z$ with $p\nmid ab$, clearly
$$0\ \mo p, 1\times a/b\ \mo p,\cdots, (p-1)\times a/b\ \mo p$$
is a permutation $\pi_p(a/b)$ of the sequence
$$0\ \mo p, 1\ \mo p,\cdots, p-1\ \mo p.$$
We also let $\sgn(\pi_p(a/b))$ denote the sign of $\pi_p(a/b)$.
In the case $b=1$, Zolotarev's lemma \cite{Zolotarev} says that $\sgn(\pi_p(a))=(\frac{a}{p})$. However, it is easy to generalize this to any $b\in\Z$ with $p\nmid b$. In fact, let $b'$ be an integer with $bb'\equiv1\pmod p$. Then clearly $(\frac{b}{p})=(\frac{b'}{p})$. Note that $\pi_p(a/b)=\pi_p(ab')$. By Zolotarev's lemma \cite{Zolotarev} we obtain that $$\sgn(\pi_p(a/b))=\sgn(\pi_p(ab'))=\(\frac{ab'}{p}\)=\(\frac{ab}{p}\).$$
In view of the above, we obtain the following result:
\begin{lemma}\label{Lem. Z lemma}{\rm (Zolotarev)}
Let notations be as the above. Then
$$\sgn(\pi_p(a/b))=\(\frac{ab}{p}\).$$
\end{lemma}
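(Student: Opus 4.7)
The strategy is to reduce the ``rational'' case $a/b$ to the classical (integer) Zolotarev lemma by absorbing the denominator into a multiplicative inverse modulo $p$. Since $p\nmid b$, choose an integer $b'$ with $bb'\equiv 1\pmod p$; such a $b'$ exists and is unique modulo $p$. Then for every $i\in\{0,1,\ldots,p-1\}$ we have $i\cdot(a/b)\equiv i\cdot ab'\pmod p$, so the two permutations $\pi_p(a/b)$ and $\pi_p(ab')$ of $\F_p$ coincide as permutations (not merely as functions whose residues agree). In particular,
$$\sgn(\pi_p(a/b))=\sgn(\pi_p(ab')).$$

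Next, I would invoke the classical Zolotarev lemma, which applies to integer multipliers and yields $\sgn(\pi_p(ab'))=(\frac{ab'}{p})$. Finally, multiplicativity of the Legendre symbol combined with $bb'\equiv 1\pmod p$ gives $(\frac{b'}{p})=(\frac{b}{p})$, so
$$\(\frac{ab'}{p}\)=\(\frac{a}{p}\)\(\frac{b'}{p}\)=\(\frac{a}{p}\)\(\frac{b}{p}\)=\(\frac{ab}{p}\).$$
Chaining these equalities produces the stated formula $\sgn(\pi_p(a/b))=(\frac{ab}{p})$.

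There is essentially no obstacle here: the entire content is the observation that the ``rational'' multiplier $a/b$ can be replaced by an honest integer $ab'$ modulo $p$, after which the classical Zolotarev lemma applies directly. The only point that merits a careful sentence is the equality $\pi_p(a/b)=\pi_p(ab')$ of permutations of $\F_p$, but this is immediate from the definition of $\pi_p$ together with $a/b\equiv ab'\pmod p$.
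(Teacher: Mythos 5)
Your proof is correct and follows exactly the same route as the paper: choose $b'$ with $bb'\equiv1\pmod p$, observe that $\pi_p(a/b)=\pi_p(ab')$, apply the classical Zolotarev lemma, and conclude via $(\frac{b'}{p})=(\frac{b}{p})$. No issues.
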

Now we are in a position to prove Theorem \ref{Thm. [c,d]p}. As usual, given any matrix $M$, the symbol $M^T$ denotes the transpose of $M$.

{\bf Proof of Theorem \ref{Thm. [c,d]p}.} In this proof, for any odd prime $p$ we set
$$M_p:=\bigg[\(\frac{i^2+cij+dj^2}{p}\)\bigg]_{0\le i,j\le p-1}.$$
Suppose first that $c^2-4d=0$ and $p\mid d$. Then clearly $M_p$ is singular. Suppose now that $c^2-4d=0$ and $p\nmid d$. Then we have
$$M_p=\bigg[\(\frac{(i+cj/2)^2}{p}\)\bigg]_{0\le i,j\le p-1}.$$
By Lemma \ref{Lem. Z lemma} one can easily verify that
$$\det M_p=\sgn(\pi_p(-2/c))\times\left| \begin{array}{cccccccc}
 0  & 1  & \ldots  & 1  \\
 1 & 0 &  \ldots  &  1\\
 \vdots & \vdots & \ddots  & \vdots    \\
 1& 1& \ldots &  0\\
 \end{array} \right|=\(\frac{-2c}{p}\)\times(p-1).$$
This proves (i).

Now we consider the case $c^2-4d\ne0$. We claim that
$$\mu_p:=\sum_{j=1}^{p-1}\(\frac{1+cj+dj^2}{p}\)\(\frac{j}{p}\)$$
is an eigenvalue of $M_p$, and that
$${\bf v}_p:=\(0,\(\frac{1}{p}\),\cdots,\(\frac{p-1}{p}\)\)^T$$
is an eigenvector for $\mu_p$. In fact, for each $1\le i\le p-1$, we have
\begin{align*}
\sum_{j=1}^{p-1}\(\frac{i^2+cij+dj^2}{p}\)\(\frac{j}{p}\)
&=\sum_{j=1}^{p-1}\(\frac{1+cj/i+dj^2/i^2}{p}\)\(\frac{j/i}{p}\)\(\frac{i}{p}\)\\
&=\sum_{j=1}^{p-1}\(\frac{1+cj+dj^2}{p}\)\(\frac{j}{p}\)\(\frac{i}{p}\)=\mu_p\(\frac{i}{p}\).
\end{align*}
For $i=0$, we have
$$\sum_{j=1}^{p-1}\(\frac{d}{p}\)\(\frac{j}{p}\)=0.$$
In view of the above, we have $M_p{\bf v}_p=\mu_p{\bf v}_p$.

As $d\ne0$ and $c^2-4d\ne0$, clearly the equation $y^2=dx^3+cx^2+x$ defines an elliptic curve $E$ over $\Q$. By Lemma \ref{Lem. infinitely many supersingular} we know that there are infinitely many primes $p>3$ such that
$$E_p:=\{(x,y)\in\F_p\times\F_p: y^2=dx^3+cx^2+x\}\cup\{\infty\}$$
is a supersingular elliptic curve over $\F_p$. By Lemma \ref{Lem. criterion for supersingular} we have $\#E_p=p+1$. By (\ref{Eq. ap}) we have
$$\sum_{j=0}^{p-1}\(\frac{j+cj^2+dj^3}{p}\)=0=\mu_p.$$
This shows that $[c,d]_p=0$. Moreover, if $E_p$ is an elliptic curve over $\F_p$, then by \cite[p. 148, Theorem 4.1]{Silverman} we know that $E_p$ is supersingular if $p$ divides the coefficient of $x^{p-1}$ of the polynomial $(dx^3+cx^2+x)^{\frac{p-1}{2}}$, i.e., $p\mid A_{c,d}(p)$.
In view of the above, (ii) holds. This completes the proof.\qed

We now turn to the proof of Theorem \ref{Thm. (c,1)p}. We first introduce the definition of a circulant matrix. Let $R$ be a commutative ring, and let $m$ be a positive integer. Let $a_0,a_1,\cdots,a_{m-1}\in R$. Then the {\it circulant matrix} of the $m$-tuple $(a_0,\cdots,a_{m-1})$ is defined by an $m\times m$ matrix over $R$ whose $(i,j)$-entry is $a_{i-j}$ where the indices are
cyclic modulo $m$. We also denote this matrix by $C(a_0,a_1,\cdots,a_{m-1})$.

The following lemma is the key element in our proof of Theorem \ref{Thm. (c,1)p}.
\begin{lemma}\label{Lem. The key lemma}
Let $R$ be a commutative ring. Let $m$ be a positive integer. Let $a_0,a_1,\cdots,a_{m-1}\in R$ such that
\begin{equation}\label{Eq. condition for ai}
a_i=a_{m-i}\ \ \ \ \ \text{for each}\ 1\le i\le m-1
\end{equation}
If $m$ is even, then there exists an element $u\in R$ such that
\begin{equation}\label{Eq. result of the key lemma}
\det C(a_0,a_1,\cdots,a_{m-1})=\(\sum_{i=0}^{m-1}a_i\)\(\sum_{i=0}^{m-1}(-1)^ia_i\)u^2
\end{equation}
If $m$ is odd, then there exists an element $v\in R$ such that
\begin{equation}\label{Eq. result2 of the key lemma}
\det C(a_0,a_1,\cdots,a_{m-1})=\(\sum_{i=0}^{m-1}a_i\)v^2
\end{equation}
\end{lemma}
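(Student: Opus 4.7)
The plan is to use the fact that the circulant matrix $C = C(a_0, \ldots, a_{m-1})$ equals $f(P)$, where $P$ is the cyclic shift permutation matrix and $f(x) = \sum_{i=0}^{m-1} a_i x^i \in R[x]$. This yields $\det C = {\rm Res}_x(f(x), x^m - 1)$; factoring $x^m - 1 = \prod_{d \mid m} \Phi_d(x)$ into cyclotomic polynomials over $\Z$ and using multiplicativity of the resultant in the second argument, one gets
$$\det C = \prod_{d \mid m} {\rm Res}\bigl(f(x), \Phi_d(x)\bigr).$$
The $d = 1$ factor is ${\rm Res}(f, x-1) = f(1) = \sum_i a_i$, and when $m$ is even the $d = 2$ factor is ${\rm Res}(f, x+1) = f(-1) = \sum_i (-1)^i a_i$. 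These are precisely the prefactors appearing in the lemma, so it suffices to show that $\prod_{d \mid m,\ d \ge 3} {\rm Res}(f, \Phi_d)$ is a perfect square in $R$.

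For this I would exploit that $a_i = a_{m-i}$ is equivalent to $f(x) \equiv f(x^{-1}) \pmod{x^m - 1}$, hence $f(\zeta) = f(\zeta^{-1})$ for every root of unity $\zeta$ of order dividing $m$. For $d \ge 3$ the involution $\zeta \mapsto \zeta^{-1}$ has no fixed primitive $d$-th roots, so these $\phi(d)$ roots split into $\phi(d)/2$ pairs and
$${\rm Res}(f, \Phi_d) = \prod_\zeta f(\zeta) = \Big(\prod_{\{\zeta,\zeta^{-1}\}} f(\zeta)\Big)^2.$$
To recognise the inner product as an element of $R$, I would pass to $y = \zeta + \zeta^{-1}$ via the Chebyshev-type polynomials $U_i(y) \in \Z[y]$ defined by $U_i(z + z^{-1}) = z^i + z^{-i}$. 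In the even case $m = 2k$ this gives $f(\zeta) = h_{\varepsilon_d}(\zeta + \zeta^{-1})$ with $h_\varepsilon(y) = (a_0 + \varepsilon a_k) + \sum_{i=1}^{k-1} a_i U_i(y) \in R[y]$ and $\varepsilon_d := \zeta^k \in \{\pm 1\}$; in the odd case $m = 2k+1$ there is no $a_k$-term to twist, and $f(\zeta) = h(\zeta + \zeta^{-1})$ with $h(y) = a_0 + \sum_{i=1}^{k} a_i U_i(y)$. Setting $\Phi_d^+(y) := \prod_{\{\zeta,\zeta^{-1}\}} \bigl(y - (\zeta + \zeta^{-1})\bigr)$, one then identifies $\prod_{\{\zeta,\zeta^{-1}\}} f(\zeta) = {\rm Res}(h_{\varepsilon_d}, \Phi_d^+)$.

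The main obstacle is to confirm that $\Phi_d^+(y) \in \Z[y]$ and that $\varepsilon_d$ depends only on $d$ (and not on the particular primitive $d$-th root $\zeta$). Both are classical facts about cyclotomic fields: the set $\{\zeta + \zeta^{-1} : \zeta \text{ primitive } d\text{-th root}\}$ is the complete Galois orbit of $2\cos(2\pi/d)$ over $\Q$, hence Galois-stable, giving $\Phi_d^+ \in \Z[y]$ of degree $\phi(d)/2$; and $\zeta^k = \pm 1$ is determined by whether $d \mid k$ (since $(\zeta^k)^2 = \zeta^m = 1$), a condition that depends only on $d$ once $m = 2k$ is fixed. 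Once these are verified, taking
$$v := \prod_{d \mid m,\ d \ge 3} {\rm Res}\bigl(h_{\varepsilon_d}, \Phi_d^+\bigr) \in R$$
yields $\det C = f(1)\,v^2$ in the odd case and $\det C = f(1)\,f(-1)\,v^2$ in the even case, proving the lemma with $u = v$ in the even setting.
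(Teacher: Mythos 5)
Your argument is correct, and it is a genuinely different implementation of the common starting point $\det C=\prod_{j=0}^{m-1}f(\zeta_m^j)$ combined with the pairing $f(\zeta)=f(\zeta^{-1})$ forced by $a_i=a_{m-i}$. The paper works in the generic polynomial ring $\Z[X_0,\cdots,X_{m/2}]$, writes $\det C=R_0R_{m/2}\bigl(\prod_{j=1}^{m/2-1}R_j\bigr)^2$ with $R_j=\sum_i\zeta_m^{ji}X_i$, and shows in one stroke that the single product $\prod_{j=1}^{m/2-1}R_j$ has coefficients in $\Z$, because it is fixed by ${\rm Gal}(\Q(\zeta_m)/\Q)$ and its coefficients are algebraic integers; specializing $X_i\mapsto a_i$ then produces $u\in R$. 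You instead group the eigenvalues by the order $d$ of $\zeta_m^j$, i.e.\ factor $\det C=\prod_{d\mid m}{\rm Res}(f,\Phi_d)$, and descend each block with $d\ge 3$ separately to the real cyclotomic subfield via $y=\zeta+\zeta^{-1}$, the Dickson/Chebyshev polynomials $U_i$, and the minimal polynomial $\Phi_d^+$ of $2\cos(2\pi/d)$. This buys a finer and more explicit square root (your $u$ is a product of resultants ${\rm Res}(h_{\varepsilon_d},\Phi_d^+)$, each individually an element of $R$, one per divisor $d\ge3$) at the cost of more machinery; the paper's Galois-fixed-point argument is shorter and avoids the cyclotomic factorization entirely. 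One point you should make explicit, as the paper does: expressions such as ${\rm Res}(f,\Phi_d)=\prod_{\Phi_d(\zeta)=0}f(\zeta)$ and ${\rm Res}(h,\Phi_d^+)=\prod_{\eta}h(\eta)$ have no direct meaning for $f\in R[x]$ over an arbitrary commutative ring, so the whole computation should first be carried out with indeterminate coefficients in $\Z[X_0,\cdots,X_{m/2}]$ --- where every resultant against the monic $\Phi_d$ or $\Phi_d^+$ is a Sylvester determinant and all your identities are identities of integer polynomials --- and then transported to $R$ along $X_i\mapsto a_i$. With that reduction stated, the small sign ambiguities in your resultant conventions are harmless, since every disputed factor ends up squared.
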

\begin{proof}
We only prove the case that $m$ is even. With essentially the same method, one can also verify the case that $m$ is odd. Assume now that $m$ is even. Consider the polynomial ring $\Z[X_0,X_1,\cdots,X_{m/2}]$ and set
\begin{equation}\label{Eq. Xi=Xm-i in the key lemma}
X_{i}=X_{m-i}\ \ \ \ \ \text{for each}\ m/2<i<m.
\end{equation}
Consider the circulant matrix $C(X_0,X_1,\cdots,X_{m-1})$. We shall show that
\begin{equation}\label{Eq. polynomial version in the key lemma}
\det C(X_0,X_1,\cdots,X_{m-1})=\(\sum_{i=0}^{m-1}X_i\)\(\sum_{i=0}^{m-1}(-1)^iX_i\)P^2
\end{equation}
for some polynomial $P\in\Z[X_0,\cdots,X_{m/2}]$. Once this is proved, by substituting $a_0,a_1,\cdots,a_{m/2}$ for $X_0,X_1,\cdots,X_{m/2}$ our desired result will follow. Thus, it remains to prove (\ref{Eq. polynomial version in the key lemma}).

Let $\zeta_m\in\C$ be a primitive $m$-th root of unity, and let $S$ denote the ring $\Z[\zeta_m]$. The classical formula for the determinant of a circulant matrix (cf. \cite[Example 7.2]{Conrad} or \cite[Theorem 6]{Kra}) yields
$$\det C(X_0,X_1,\cdots,X_{m-1})=\prod_{j=0}^{m-1}\sum_{i=0}^{m-1}\zeta_m^{ji}X_i.$$
If we set
$$R_j:=\sum_{i=0}^{m-1}\zeta_{m}^{ji}X_i\ \ \ \ \ \text{for each}\ 1\le j\le m-1,$$
then this rewrites as
\begin{equation}\label{Eq. Rj in the key lemma}
\det C(X_0,X_1,\cdots,X_{m-1})=\prod_{j=0}^{m-1}R_j.
\end{equation}
The polynomials $R_0,\cdots,R_{m-1}$ lie in $\Z[X_0,\cdots,X_{m/2}]$. By (\ref{Eq. Xi=Xm-i in the key lemma}) we obtain that
\begin{equation}\label{Eq. Rj=Rm-j in the key lemma}
R_{m-j}=R_j\ \ \ \ \ \text{for each}\ 1\le j\le m-1.
\end{equation}
Let ${\rm G}(\Q(\zeta_m)/\Q)$ denote the Galois group of the Galois extension $\Q(\zeta_m)/\Q$. Given any $\sigma_g\in{\rm G}(\Q(\zeta_m)/\Q)$ with $(g,m)=1$ which sends $\zeta_m$ to $\zeta_m^g$. Note that $\sigma_g(S)=S$. Thus $\sigma_g$ acts on the polynomial ring $S[X_0,X_1,\cdots,X_{m/2}]$ (by acting on each coefficient). Considering this $g$, and letting the indices on the $R_j$ run cyclically modulo $m$, we thus see that $$\sigma_g(R_j)=R_{gj}\ \ \ \ \ \text{for each}\ 0\le j\le m-1.$$
We now claim that $\sigma_g$ permutes the $m/2-1$ polynomials $R_1,R_2,\cdots,R_{m/2-1}$. In fact, let $\pi_g$ be the map from $\{1,2,\cdots,m/2-1\}$ to $\{1,2,\cdots,m/2-1\}$ which sends each $j$ to the unique element $\pi_g(j)\in\{1,2,\cdots,m/2-1\}$ satisfying
$$\pi_g(j)\equiv\pm gj\pmod m.$$
Clearly $\pi_g$ is a permutation of $\{1,2,\cdots, m/2-1\}$. Combining this with (\ref{Eq. Rj=Rm-j in the key lemma}), we thus have $\sigma_g(R_j)=R_{\pi_g(j)}$ for each $1\le j\le m/2-1$ and hence our claim holds. By this $\sigma_g$ fixes $\prod_{j=1}^{m/2-1}R_j$. In other words, each coefficient of the polynomial $\prod_{j=1}^{m/2-1}R_j$ is fixed by $\sigma_g$.

In view of the above, by the Galois theory each coefficient of the polynomial $\prod_{j=1}^{m/2-1}R_j$ belongs to $\Q$. Moreover, as $\prod_{j=1}^{m/2-1}R_j\in S[X_0,\cdots,X_{m/2}]$, each coefficient of this polynomial is also an algebraic integer. This entails that each coefficient is a rational integer. In other words, the polynomial $\prod_{j=1}^{m/2-1}R_j\in\Z[X_0,\cdots,X_{m/2}]$.

Now by (\ref{Eq. Rj=Rm-j in the key lemma}) the equality (\ref{Eq. Rj in the key lemma}) becomes
\begin{align*}
\det C(X_0,\cdots,X_{m-1})=R_0R_{m/2}(R_1R_2\cdots R_{m/2-1})^2.
\end{align*}
Noting that
$$R_0=\sum_{i=0}^{m-1}X_i,\ R_{m/2}=\sum_{i=0}^{m-1}(-1)^iX_i,$$
and that $\prod_{j=1}^{m/2-1}R_j\in\Z[X_0,\cdots,X_{m/2}]$, one can easily verify that (\ref{Eq. polynomial version in the key lemma}) holds. This completes the proof.
\end{proof}

We also need the following known result in linear algebra. \begin{lemma}\label{Lem. eigenvalues}
Let $M$ be an $m\times m$ complex matrix. Let $\mu_1,\cdots,\mu_m$ be complex numbers, and let ${\bf u}_1,\cdots, {\bf u}_m$ be $m$-dimensional column vectors. If $M{\bf u}_k=\mu_k{\bf u}_k$ for each $1\le k\le m$ and ${\bf u}_1,\cdots, {\bf u}_m$ are linearly independent, then $\mu_1,\cdots,\mu_m$ are exactly all the eigenvalues of $M$ (counting multiplicity).
\end{lemma}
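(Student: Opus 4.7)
The plan is to package the hypothesis into a single matrix identity and then invoke similarity. Let $U$ denote the $m\times m$ complex matrix whose $k$-th column is the vector $\mathbf{u}_k$, and let $D=\mathrm{diag}(\mu_1,\ldots,\mu_m)$ be the diagonal matrix with the given scalars on the diagonal. First I would observe that the $m$ equations $M\mathbf{u}_k=\mu_k\mathbf{u}_k$ for $1\le k\le m$ can be written simultaneously, column by column, as the single matrix equation
\[
MU=UD.
\]

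Next, since $\mathbf{u}_1,\ldots,\mathbf{u}_m$ are linearly independent column vectors in $\mathbb{C}^m$, the matrix $U$ is invertible. Multiplying the identity above on the right by $U^{-1}$ yields $M=UDU^{-1}$, i.e., $M$ is similar to the diagonal matrix $D$.

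Finally, I would invoke the standard fact that similar matrices have the same characteristic polynomial: $\det(tI-M)=\det(tI-UDU^{-1})=\det\bigl(U(tI-D)U^{-1}\bigr)=\det(tI-D)=\prod_{k=1}^{m}(t-\mu_k)$. Hence the roots of the characteristic polynomial of $M$, counted with multiplicity, are exactly $\mu_1,\ldots,\mu_m$, which is what the lemma asserts.

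There is no real obstacle here; the statement is a textbook consequence of diagonalizability, and the only subtlety worth flagging is the use of linear independence to guarantee that $U$ is invertible (so that the similarity step is legitimate) and the fact that the characteristic polynomial is a similarity invariant, which accounts for the phrase ``counting multiplicity'' in the conclusion.
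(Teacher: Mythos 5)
Your proof is correct. Note that the paper itself offers no proof of this lemma --- it is stated as a ``known result in linear algebra'' and used as a black box --- so there is nothing to compare against; your argument via $MU=UD$, invertibility of $U$ from linear independence, and invariance of the characteristic polynomial under similarity is the standard and complete justification, and it correctly accounts for the ``counting multiplicity'' clause through $\det(tI-M)=\prod_{k=1}^{m}(t-\mu_k)$.
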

In addition, we let $\chi(\Z/p\Z)$ be the group of all multiplicative characters of the finite field $\Z/p\Z=\F_p$, and let $\chi_p$ be a generator of $\chi(\Z/p\Z)$. We now prove our theorem.

{\bf Proof of Theorem \ref{Thm. (c,1)p}.} (i) If $c\equiv\pm2\pmod p$, then clearly we have
$(c,1)_p=\det[(\frac{(i+cj/2)^2}{p})]_{1\le i,j\le p-1}$. This clearly is equal to
$$\sgn(\pi_p(-c/2))\times\left| \begin{array}{cccccccc}
 0  & 1  & \ldots  & 1  \\
 1 & 0 &  \ldots  &  1\\
 \vdots & \vdots & \ddots  & \vdots    \\
 1& 1 & \ldots &  0\\
 \end{array} \right|=\(\frac{-2c}{p}\)\times(2-p).$$
This implies (i).

(ii) We first let
$$N_p:=\bigg[\(\frac{i^2+cij+j^2}{p}\)\bigg]_{1\le i,j\le p-1}.$$
For each $1\le k\le p-1$, we let
$$\lambda_k:=\sum_{j=1}^{p-1}\(\frac{1+cj+j^2}{p}\)\chi_p^k(j).$$
We claim that the numbers $\lambda_1,\cdots,\lambda_{p-1}$ are exactly all the eigenvalues of $N_p$ (counting multiplicity). In fact, for any $1\le i,k\le p-1$ we have
\begin{align*}
\sum_{j=1}^{p-1}\(\frac{i^2+cij+j^2}{p}\)\chi_p^{k}(j)
&=\sum_{j=1}^{p-1}\(\frac{1+cj/i+j^2/i^2}{p}\)\chi_p^k(j/i)\chi_p^k(i)\\
&=\sum_{j=1}^{p-1}\(\frac{1+cj+j^2}{p}\)\chi_p^k(j)\chi_p^k(i)=\lambda_k\chi_p^k(i).
\end{align*}
This implies that for each $1\le k\le p-1$, we have
$$N_p{\bf v}_k=\lambda_k{\bf v}_k,$$
where
$${\bf v}_k:=(\chi_p^k(1),\chi_p^k(2),\cdots,\chi_p^k(p-1))^T.$$
Noting that
$$\left| \begin{array}{cccccccc}
\chi_p^1(1) & \chi_p^2(1) & \ldots  & \chi_p^{p-1}(1)  \\
\chi_p^1(2) & \chi_p^2(2) &  \ldots  &  \chi_p^{p-1}(2)\\
\vdots & \vdots & \ddots  & \vdots    \\
\chi_p^1(p-1)& \chi_p^2(p-1) & \ldots &  \chi_p^{p-1}(p-1)\\
\end{array} \right|^2=\prod_{0<i<j<p}\(\chi_p(j)-\chi_p(i)\)^2\ne0,$$
we therefore obtain that ${\bf v}_1,\cdots,{\bf v}_{p-1}$ are linearly independent. By Lemma \ref{Lem. eigenvalues} our claim holds. In addition, as $N_p$ is a real symmetric matrix, all the eigenvalues $\lambda_k$ are real. Hence for any $1\le k\le (p-3)/2$ we have $\lambda_k=\overline{\lambda_{k}}=\lambda_{p-1-k}$, where $\overline{\lambda_k}$ denotes the complex conjugation of $\lambda_k$.
When $k=p-1$, as $c\not\equiv\pm2\pmod p$, by \cite[p. 63, Exercise 8]{classical} it is easy to see that
\begin{equation}\label{Eq. E in Thm. (c,1)p}
\lambda_{p-1}=\sum_{j=1}^{p-1}\(\frac{1+cj+j^2}{p}\)=-2.
\end{equation}
When $k=\frac{p-1}{2}$, then
\begin{equation}\label{Eq. F in Thm. (c,1)p}
\lambda_{\frac{p-1}{2}}=\sum_{j=1}^{p-1}\(\frac{1+cj+j^2}{p}\)\(\frac{j}{p}\)=-a_p(c).
\end{equation}
Hence we have
\begin{equation}\label{Eq. G in Thm. (c,1)p}
\det N_p=\lambda_{p-1}\lambda_{\frac{p-1}{2}}\prod_{1\le k\le (p-3)/2}\lambda_k^2.
\end{equation}
We now apply Lemma \ref{Lem. The key lemma} to our remaining proof. Fix a primitive root $\xi$ modulo $p$. Since the sequence $\xi^1\ {\rm mod}\ p,\xi^2\ {\rm mod}\ p,\cdots,\xi^{p-1}\ {\rm mod}\ p$ is a permutation of the sequence $1\ {\rm mod}\ p, 2\ {\rm mod}\ p,\cdots, p-1\ {\rm mod}\ p$, one can verify that
\begin{align*}
\det N_p&=\det\bigg[\(\frac{\xi^{2i}+c\xi^{i+j}+\xi^{2j}}{p}\)\bigg]_{1\le i,j\le p-1}\\
&=\det\bigg[\(\frac{\xi^{2(i-j)}+c\xi^{i-j}+1}{p}\)\bigg]_{1\le i,j\le p-1}\\
&=\det C(\alpha_0,\alpha_1,\cdots,\alpha_{p-2}),
\end{align*}
where $\alpha_i=(\frac{\xi^{2i}+c\xi^i+1}{p})$ for any $0\le i\le p-2$. It is easy to verify that $\alpha_0,\cdots,\alpha_{p-2}$ satisfy the condition (\ref{Eq. condition for ai}) of Lemma \ref{Lem. The key lemma}. Hence
\begin{equation}\label{Eq. in the proof of (c,1)p circulant matrix}
\det C(\alpha_0,\alpha_1,\cdots,\alpha_{p-2})
=\(\sum_{i=0}^{p-2}\alpha_i\)\(\sum_{i=0}^{p-2}(-1)^i\alpha_i\)u_p(c)^2
\end{equation}
for some $u_p(c)\in\Z$. Note that
\begin{equation}\label{Eq. in the proof of (c,1)p sum1}
\sum_{i=0}^{p-2}\alpha_i=\sum_{j=1}^{p-1}\(\frac{j^2+cj+1}{p}\)=\lambda_{p-1},
\end{equation}
and that
\begin{equation}\label{Eq. in the proof of (c,1)p sum2}
\sum_{i=0}^{p-2}(-1)^i\alpha_i
=\sum_{i=0}^{p-2}\(\frac{\xi^i}{p}\)\alpha_i
=\sum_{j=1}^{p-1}\(\frac{1+cj+j^2}{p}\)\(\frac{j}{p}\)=\lambda_{(p-1)/2}.
\end{equation}
In view of (\ref{Eq. E in Thm. (c,1)p})-(\ref{Eq. in the proof of (c,1)p sum2}), we obtain that
$\det N_p=2a_c(p)x_c(p)^2$, where $x_c(p)=\pm u_p(c)=\pm\prod_{k=1}^{(p-3)/2}\lambda_k$ is an integer. This completes the proof. \qed

Now we turn to Corollary \ref{Cor. squares and determinants}.

{\bf Proof of Corollary \ref{Cor. squares and determinants}.} (i) We first consider $(3,2)_p$. If $p\equiv5\pmod8$, then by \cite[Theorem 1.3(i)]{Sun2019det} we have $(3,2)_p=0$. Hence we just need handle the case $p\equiv1\pmod8$. Recall that we write $p=x_a^2+4y_a^2$ with $x_a\equiv1\pmod4$.
Suppose $2\equiv f_p^2\pmod p$. Then by (\ref{Eq. introduction}) we have
\begin{equation}\label{Eq. (3,2)p transfer}
(3,2)_p=\(\frac{f_p}{p}\)\times(g_p,1)_p,
\end{equation}
where $g_p\in\Z$ with $g_p\equiv 3/f_p\pmod p$. Now we consider the elliptic curve
$$E_{g_p}(\F_p):=\{(x,y)\in\F_p\times\F_p: y^2=x^3+g_px^2+x\}\cup\{\infty\}.$$
Clearly $\#E_{g_p}(\F_p)$ is equal to
\begin{align*}
p+1-a_p(g_p)&=p+1+\sum_{j=0}^{p-1}\(\frac{j^2+g_pj+1}{p}\)\(\frac{j}{p}\)\\
&=p+1+\sum_{j=0}^{p-1}\(\frac{f_pj}{p}\)\(\frac{1+g_pf_pj+(f_pj)^2}{p}\)\\
&=p+1+\(\frac{f_p}{p}\)\sum_{j=0}^{p-1}\(\frac{j}{p}\)\(\frac{1+3j+2j^2}{p}\)\\
&=p+1-2x_a\(\frac{f_p}{p}\)
\end{align*}
The last equality follows from \cite[Lemma 3.3(ii)]{Dimitry}. Combining this with Theorem \ref{Thm. (c,1)p}(ii) and (\ref{Eq. (3,2)p transfer}), the number $(3,2)_p/x_a$ is a square.

(ii) Let $p\equiv1\pmod8$ be a prime with $p=x_b^2+2y_b^2$ $(x_b\equiv 1\pmod4)$. As before, we set $2\equiv t_p^2\pmod p$. Then by (\ref{Eq. introduction}) we clearly have
\begin{equation}\label{Eq. (4,2)p transfer}
(4,2)_p=(8,8)_p=\(\frac{t_p}{p}\)\times(e_p,1)_p,
\end{equation}
where $e_p\in\Z$ with $4/t_p\equiv e_p\pmod p$. Consider the curve
$$E_{e_p}(\F_p):=\{(x,y)\in\F_p\times\F_p: y^2=x^3+e_px^2+x\}\cup\{\infty\}.$$
Then clearly $\#E_{e_p}(\F_p)$ is equal to
\begin{align*}
p+1-a_p(e_p)&=p+1+\sum_{j=0}^{p-1}\(\frac{j^2+e_pj+1}{p}\)\(\frac{j}{p}\)\\
&=p+1+\sum_{j=0}^{p-1}\(\frac{(t_pj)^2+e_pt_pj+1}{p}\)\(\frac{t_pj}{p}\)\\
&=p+1+\(\frac{t_p}{p}\)\sum_{j=0}^{p-1}\(\frac{2j^2+4j+1}{p}\)\(\frac{j}{p}\)\\
&=p+1+2x_b(-1)^{\frac{p-9}{8}}\(\frac{t_p}{p}\).
\end{align*}
The last equality follows from \cite[Lemma 3.3(i)]{Dimitry}. Combining this with Theorem \ref{Thm. (c,1)p}(ii) and (\ref{Eq. (4,2)p transfer}), the number $(-1)^{\frac{p-1}{8}}(4,2)_p/x_b$ is a square.

Now let $p\equiv1\pmod{12}$ be a prime. Write $p=x_c^2+3y_c^2$ with $x_c\equiv1\pmod3$. Let $l_p^2\equiv3\pmod p$. Then by (\ref{Eq. introduction}) we have
\begin{equation}\label{Eq. (3,3)p transfer}
(3,3)_p=\(\frac{l_p}{p}\)\times(n_p,1)_p,
\end{equation}
where $n_p\in\Z$ with $n_p\equiv 3/l_p\pmod p$. Consider the curve
$$E_{n_p}(\F_p):=\{(x,y)\in\F_p\times\F_p: y^2=x^3+n_px^2+x\}\cup\{\infty\}.$$
Then with the same method as above, $\#E_{n_p}(\F_p)$ is equal to
\begin{align*}
p+1-a_{p}(n_p)&=p+1+\sum_{j=0}^{p-1}\(\frac{j^2+n_pj+1}{p}\)\(\frac{j}{p}\)\\
&=p+1+\(\frac{l_p}{p}\)\sum_{j=0}^{p-1}\(\frac{3j^2+3j+1}{p}\)\(\frac{j}{p}\)\\
&=p+1-2x_c\(\frac{l_p}{p}\).
\end{align*}
The last equality follows from \cite[Lemma 3.3]{Dimitry}. Combining this with Theorem \ref{Thm. (c,1)p}(ii) and (\ref{Eq. (3,3)p transfer}), the number $(3,3)_p/x_c$ is a square.

(iii) Let $p\equiv1,9,25\pmod{28}$ and write $p=x_d^2+7y_d^2$ with $(\frac{x_d}{7})=1$. Suppose now $112\equiv m_p^2\pmod p$. Then by (\ref{Eq. introduction}) we obtain
\begin{equation}\label{Eq. (21,112)p transfer}
(21,112)_p=\(\frac{m_p}{p}\)\times(r_p,1)_p,
\end{equation}
where $r_p\equiv 21/m_p\pmod p$. Consider the curve
$$E_{r_p}(\F_p):=\{(x,y)\in\F_p\times\F_p: y^2=x^3+r_px^2+x\}\cup\{\infty\}.$$
Then with the same method as above, $\#E_{r_p}(\F_p)$ is equal to
\begin{align*}
p+1-a_{p}(r_p)&=p+1+\sum_{j=0}^{p-1}\(\frac{j^2+r_pj+1}{p}\)\(\frac{j}{p}\)\\
&=p+1+\(\frac{m_p}{p}\)\sum_{j=0}^{p-1}\(\frac{112j^2+21j+1}{p}\)\(\frac{j}{p}\)\\
&=p+1-2x_d\(\frac{m_p}{p}\).
\end{align*}
The last equality follows from \cite[Lemma 3.3]{Dimitry}. Combining this with Theorem \ref{Thm. (c,1)p}(ii) and (\ref{Eq. (21,112)p transfer}), the number $(21,122)_p/x_d$ is a square.

In view of the above, we complete the proof.\qed
\maketitle
\section{Proof of Theorem \ref{Thm. biquadratic}}
\setcounter{lemma}{0}
\setcounter{theorem}{0}
\setcounter{corollary}{0}
\setcounter{remark}{0}
\setcounter{equation}{0}
\setcounter{conjecture}{0}
We first give the definition of Jacobsthal sums. Readers may consult \cite[Chapter 6]{Berndt}. For any positive integer $k$, the Jacobsthal sums $\phi_k(1)$ and $\psi_k(1)$ are defined by
\begin{align*}
\phi_k(1)&=\sum_{x=1}^{p-1}\(\frac{x}{p}\)\(\frac{x^k+1}{p}\),
\\\psi_k(1)&=\sum_{x=1}^{p-1}\(\frac{x^k+1}{p}\).
\end{align*}
We need the following result involving Jacobsthal sums.
\begin{lemma}\label{Lem. Jacobsthal sums}
{\rm (i)} {\rm(\cite[Theorem 6.1.13]{Berndt})} For any positive integer $k$, we have
$$\psi_{2k}(1)=\phi_{k}(1)+\psi_k(1).$$

{\rm (ii)} {\rm(\cite[Theorem 6.2.9]{Berndt})} Let $p\equiv1\pmod4$ be a prime with $p=x_1^2+4y_1^2$ $(x_1,y_1\in\Z$ and $x_1\equiv1\pmod4)$. Then we have
$$\phi_2(1)=-2x_1.$$

{\rm (iii)} {\rm(\cite[Theorem 6.2.3]{Berndt})} Let $p\equiv1\pmod8$ be a prime and let $p=x_2^2+2y_2^2$ with $x_2,y_2\in\Z$ and $x_2\equiv1\pmod4$. Then we have
$$\phi_4(1)=-4x_2(-1)^{\frac{p-1}{8}}.$$
\end{lemma}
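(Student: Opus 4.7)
The lemma contains three Jacobsthal-sum identities, which I would treat separately: (i) by an elementary character manipulation, and (ii), (iii) by reduction to classical Jacobi-sum evaluations.

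For (i), the plan is a one-line calculation:
\begin{align*}
\phi_k(1)+\psi_k(1)
&=\sum_{x=1}^{p-1}\(1+\(\frac{x}{p}\)\)\(\frac{x^k+1}{p}\)
=2\sum_{x\in(\F_p^*)^2}\(\frac{x^k+1}{p}\).
\end{align*}
The substitution $x=y^2$ covers each nonzero square exactly twice as $y$ ranges over $\F_p^*$, so the last expression equals $\sum_{y=1}^{p-1}((y^{2k}+1)/p)=\psi_{2k}(1)$, which is (i).

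For (ii), rewrite $\phi_2(1)=\sum_x\chi_2(x)\chi_2(x^2+1)$ where $\chi_2=(\cdot/p)$, and fix a quartic character $\chi_4$ of $\F_p^*$ (available since $p\equiv1\pmod 4$). Collapsing $\chi_2=\chi_4^2$ and substituting $u=x^2+1$ (which, being a quadratic, folds $\F_p^*$ into a two-to-one cover of its image) produces, after standard rearrangement, the combination $J(\chi_4,\chi_2)+\overline{J(\chi_4,\chi_2)}$, where $J$ is the usual Jacobi sum. The Gaussian prime decomposition $p=(x_1+2y_1 i)(x_1-2y_1 i)$ with $x_1\equiv1\pmod 4$ then pins down $J(\chi_4,\chi_2)=x_1+2y_1 i$ for the canonical choice of $\chi_4$, and the identity $\phi_2(1)=-2x_1$ follows. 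Part (iii) follows the same scheme with $\chi_8$ replacing $\chi_4$: the relevant Jacobi sum lies in $\Z[\sqrt{-2}]\subset\Z[\zeta_8]$, and the representation $p=x_2^2+2y_2^2$ furnishes its real part, up to a factor $(-1)^{(p-1)/8}$ coming from complex conjugation on $\Z[\zeta_8]$.

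The main obstacle is not the absolute values (which follow immediately from $|J(\chi_4,\chi_2)|^2=p$ and the analogous identity $|J(\chi_8,\chi_8)|^2=p$) but the precise determination of signs. For (ii) one must verify that the sign convention $x_1\equiv1\pmod 4$ matches the Jacobi-sum congruence modulo $(1+i)^3$; for (iii) the extra factor $(-1)^{(p-1)/8}$ tracks the behaviour of an octic Gauss sum under complex conjugation and is pinned down by a Stickelberger-type congruence on Gauss sums modulo primes above $2$ in $\Z[\zeta_8]$. Since the statement is quoted verbatim from \cite[Chapter 6]{Berndt} (Theorems 6.1.13, 6.2.9, and 6.2.3), my plan in the actual paper is to cite those theorems directly rather than redo the sign analysis, which is the most delicate step of the argument.
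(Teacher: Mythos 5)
Your plan is correct and ends up in the same place as the paper, which offers no proof of this lemma at all and simply cites Theorems 6.1.13, 6.2.9 and 6.2.3 of Berndt--Evans--Williams. Your elementary verification of (i) (splitting off the squares via the factor $1+(\frac{x}{p})$ and substituting $x=y^2$) and your Jacobi-sum outline for (ii)--(iii) are sound, but since you too conclude by citing the reference for the delicate sign determinations, the approaches coincide.
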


{\bf Proof of Theorem \ref{Thm. biquadratic}.} Throughout this proof, we set $n=\frac{p-1}{4}$ and set
$$L_p=\bigg[\(\frac{a_i+a_j}{p}\)\bigg]_{1\le i,j\le n}.$$
Recall that $\chi_p$ denotes a generator of the character group $\chi(\Z/p\Z)$. For $1\le k\le n$, we set
$$\theta_k:=\sum_{j=1}^n\(\frac{1+a_j}{p}\)\chi_p^k(a_j).$$
We claim that $\theta_k$ are exactly all the eigenvalues of $L_p$. In fact, for any $1\le i\le n$, we have
\begin{align*}
\sum_{j=1}^n\(\frac{a_i+a_j}{p}\)\chi_p^k(a_j)
&=\sum_{j=1}^n\(\frac{1+a_j/a_i}{p}\)\chi_p^k(a_j/a_i)\chi_p^k(a_i)\\
&=\sum_{j=1}^n\(\frac{1+a_j}{p}\)\chi_p^k(a_j)\chi_p^k(a_i).
\end{align*}
This shows that for any $1\le k\le n$ we have
$$L_p{\bf w}_k=\theta_k{\bf w}_k,$$
where
$${\bf w}_k:=(\chi_p^k(a_1),\cdots,\chi_p^k(a_n))^T.$$
As in the proof of Theorem \ref{Thm. (c,1)p}, one can verify that our claim holds. In addition, since $L_p$ is a real symmetric matrix, each eigenvalue $\theta_k$ is real. Hence it is easy to verify that
\begin{equation}\label{Eq. in the proof of det Lp prod of theta k}
\det L_p=\begin{cases}\theta_n\Gamma&\mbox{if}\ p\equiv5\pmod8,\\ \theta_n\theta_{n/2}\Gamma&\mbox{if}\ p\equiv1\pmod8,\end{cases}
\end{equation}
where
$$\Gamma=\begin{cases}\prod_{k=1}^{n-1}\theta_k=\prod_{k=1}^{\frac{n-1}{2}}\theta_k^2&\mbox{if}\ p\equiv 5\pmod8,\\\prod_{k\ne n,n/2}\theta_k=\prod_{k=1}^{\frac{n}{2}-1}\theta_k^2&\mbox{if}\ p\equiv 1\pmod8.\end{cases}$$
Moreover, for each $1\le i\le n$, we choose an integer $b_i$ such that $b_i^4\equiv a_i\pmod p$. We first consider $\theta_n$. It is clear that
\begin{equation}\label{Eq. 2 in the proof of Thm. biquadratic}
\theta_n=\sum_{j=1}^n\(\frac{1+b_j^4}{p}\)=\frac{\psi_{4}(1)}{4}=\frac{-x_1-1}{2}.
\end{equation}
The last equality follows from Lemma \ref{Lem. Jacobsthal sums} and $\psi_2(1)=-2$. In the case $p\equiv1\pmod8$, the number $n$ is even. One can verify that
\begin{equation}\label{Eq. 3 in the proof of Thm. biquadratic}
\theta_{n/2}=\sum_{j=1}^n\(\frac{1+b_j^4}{p}\)\(\frac{b_j}{p}\)
=\frac{\phi_4(1)}{4}=-x_2(-1)^{\frac{p-1}{8}}.
\end{equation}
The last equality follows from Lemma \ref{Lem. Jacobsthal sums}. Fix a primitive root $\xi$ modulo $p$. One can verify that
\begin{align*}
\det L_p&=\det\bigg[\(\frac{\xi^{4i}+\xi^{4j}}{p}\)\bigg]_{0\le i,j\le n-1}\\
&=\det\bigg[\(\frac{\xi^{4(i-j)}+1}{p}\)\bigg]_{0\le i,j\le n-1}\\
&=\det C(\beta_0,\beta_1,\cdots,\beta_{n-1}),
\end{align*}
where $\beta_i=(\frac{\xi^{4i}+1}{p})$ for each $0\le i\le n-1$. Clearly $\beta_0,\cdots,\beta_{n-1}$ satisfy the condition (\ref{Eq. condition for ai}) in Lemma \ref{Lem. The key lemma}. Note that
\begin{equation}\label{Eq. sum of bi in the proof of det Lp}
\sum_{i=0}^{n-1}\beta_i=\sum_{j=1}^n\(\frac{1+a_j}{p}\)=\theta_n,
\end{equation}
and that when $n$ is even,
\begin{equation}\label{Eq. sums of bi 2 in the proof of det Lp}
\sum_{i=0}^{n-1}(-1)^i\beta_i=\(\frac{\xi^i}{p}\)\(\frac{1+\xi^{4i}}{p}\)
=\sum_{j=1}^{n}\(\frac{b_j}{p}\)\(\frac{1+b_j^4}{p}\)=\theta_{n/2}.
\end{equation}
Combining Lemma \ref{Lem. The key lemma} with (\ref{Eq. in the proof of det Lp prod of theta k})-(\ref{Eq. sums of bi 2 in the proof of det Lp}), we obtain that $\Gamma$ is an integral square. This implies our desired result. \qed

We now turn to the proof of our last theorem. We first need the following result.
\begin{lemma}\label{Lem. Jacobsthal sums2}
{\rm (i)} {\rm(\cite[Proposition 6.1.7, Theorem 6.2.10]{Berndt})} Let $p\equiv1\pmod3$, and let $p=x_3^2+3y_3^2$ with $x_3\equiv1\pmod3$. Then we have
$$\phi_3(1)=\psi_3(1)=-1-2x_3.$$

{\rm (ii)} {\rm(\cite[Theorem 6.2.5]{Berndt})} Let $p\equiv1\pmod{12}$ be a prime, and let $p=x_4^2+4y_4^2$ with $x_4,y_4\in\Z$ and $x_4\equiv(\frac{2}{p})\pmod4$. Then we have
$$\phi_6(1)=\begin{cases}2x_4(-1)^{\frac{p-1}{4}}&\mbox{if}\ 3\mid x_4,\\-6x_4(-1)^{\frac{p-1}{4}}&\mbox{if}\ 3\nmid x_4.\end{cases}$$
\end{lemma}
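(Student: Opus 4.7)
Both items are classical evaluations of Jacobsthal sums, and the route I would follow is to expand each sum as a short linear combination of Jacobi sums and then evaluate the resulting algebraic integers via the arithmetic of $\Z[\omega]$ and $\Z[i]$. The key expansion: for $k\mid p-1$, fix a multiplicative character $\chi$ of $\F_p^*$ of order $k$, and use orthogonality $\#\{x\in\F_p^*:x^k=y\}=\sum_{j=0}^{k-1}\chi^j(y)$. Substituting and shifting $y\mapsto -y$ yields
$$\psi_k(1)=-1+\sum_{j=1}^{k-1}\chi^j(-1)\,J(\chi^j,\chi_2),$$
where $J(\alpha,\beta):=\sum_{t\neq 0,1}\alpha(t)\beta(1-t)$. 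A parallel identity holds for $\phi_k(1)$: first resolve $\sum_{x^k=y}\chi_2(x)$ on the fibers of $x\mapsto x^k$ (which, since $\chi_2$ is trivial on $\mu_k$ when $2k\mid p-1$, equals $k\cdot\xi(y)$ for a suitable character $\xi$ of order $2k$) and then apply orthogonality.

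For part (i), I take $k=3$. First I would verify $\phi_3(1)=\psi_3(1)$ via the substitution $x\mapsto 1/x$ in $\phi_3(1)$, which converts it into $\sum_u \chi_2(u^4)\chi_2(u^3+1)=\psi_3(1)$ since $\chi_2(u^4)=1$. Next, since $\chi$ of order $3$ satisfies $\chi(-1)=1$,
$$\psi_3(1)=-1+J(\chi,\chi_2)+\overline{J(\chi,\chi_2)}=-1+2\,\mathrm{Re}\,J(\chi,\chi_2).$$
The Jacobi sum $J(\chi,\chi_2)$ lies in $\Z[\omega]$ with $|J(\chi,\chi_2)|^2=p$, and the Stickelberger-type congruence $J(\chi,\chi_2)\equiv -1\pmod{(1-\omega)^2}$ isolates a unique associate among the six unit translates. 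Writing $J(\chi,\chi_2)=a+b\omega$ with $a\equiv-1,\ b\equiv 0\pmod 3$ and $a^2-ab+b^2=p$, and comparing with $p=x_3^2+3y_3^2$ via $(a-b/2)^2+3(b/2)^2=p$, the normalization $x_3\equiv 1\pmod 3$ forces $2\,\mathrm{Re}\,J=2a-b=-2x_3$, giving $\psi_3(1)=-1-2x_3$.

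For part (ii), I take $k=6$ and expand $\phi_6(1)$ using a character $\chi$ of order $12$, available since $p\equiv 1\pmod{12}$: the fibre sum $\sum_{x^6=y}\chi_2(x)$ equals $6\chi(y)$ on $(\F_p^*)^6$ and vanishes elsewhere, so the expansion produces the six Jacobi sums $J(\chi^{2j+1},\chi_2)$ for $j=0,\dots,5$, weighted by $\chi(-1)$. These split into a conjugate pair of biquadratic Jacobi sums $J(\chi^3,\chi_2),J(\chi^9,\chi_2)\in\Z[i]$ of norm $p$ (with $\chi_4:=\chi^3$ quartic), and two conjugate pairs of order-$12$ Jacobi sums living in $\Z[\zeta_{12}]$. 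Using the Hasse--Davenport product relation $g(\chi)g(\chi\chi_2)=\chi^{-2}(2)\,g(\chi_2)g(\chi^2)$, the order-$12$ contributions factor through products of the biquadratic Jacobi sum $J(\chi_4,\chi_4)\in\Z[i]$ (normalized by the Stickelberger congruence $J(\chi_4,\chi_4)\equiv -1\pmod{(1+i)^3}$ and matched to $p=x_4^2+4y_4^2$ under $x_4\equiv(\tfrac{2}{p})\pmod 4$) and the cubic Jacobi sum evaluated in (i). Collecting the three conjugate-pair contributions, the cubic-character value at a fixed primitive sixth root of unity determines whether the Eisenstein prime above $p$ also splits in $\Z[i]$, equivalently whether $3\mid x_4$; aligning the resulting unit phases yields $\phi_6(1)=2x_4(-1)^{(p-1)/4}$ when $3\mid x_4$ and $\phi_6(1)=-6x_4(-1)^{(p-1)/4}$ when $3\nmid x_4$.

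The main obstacle is the unit tracking in case (ii): the Stickelberger congruences determine each Jacobi sum only up to a sixth or twelfth root of unity, and matching the signs so as to respect both normalizations $x_3\equiv 1\pmod 3$ and $x_4\equiv(\tfrac{2}{p})\pmod 4$ requires careful computation of the cubic residue character of $2$ and of certain biquadratic residue symbols, precisely as developed in Chapter 6 of Berndt--Evans--Williams.
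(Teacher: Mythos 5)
The paper does not prove this lemma at all: it is quoted verbatim from Berndt--Evans--Williams (Proposition 6.1.7, Theorems 6.2.10 and 6.2.5), so there is no in-paper argument to compare against. Your sketch reconstructs the standard route that the cited source itself follows, namely expanding the Jacobsthal sums into Jacobi sums via orthogonality on the fibres of $x\mapsto x^k$ and then pinning down the resulting algebraic integers in $\Z[\omega]$ and $\Z[i]$. The elementary ingredients you state check out: the substitution $x\mapsto 1/x$ does give $\phi_3(1)=\psi_3(1)$ because $\chi_2(x)\chi_2\bigl((1+x^3)/x^3\bigr)=\chi_2(x)^4\chi_2(1+x^3)$; the identity $\psi_k(1)=-1+\sum_{j=1}^{k-1}\chi^j(-1)J(\chi^j,\chi_2)$ is correct; and the numerical normalizations agree with small cases (e.g.\ $p=7$ gives $\psi_3(1)=3=-1-2x_3$ with $x_3=-2$, and $p=13$ gives $\phi_6(1)=-6=2x_4(-1)^{(p-1)/4}$ with $x_4=3$).

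That said, as a proof the proposal stops exactly where the real content of the cited theorems begins. In part (i) the step ``$J(\chi,\chi_2)\equiv-1\pmod{(1-\omega)^2}$ forces $2\,\mathrm{Re}\,J=-2x_3$ under $x_3\equiv1\pmod3$'' is asserted rather than derived, and in part (ii) you explicitly defer the unit tracking --- the determination of which sixth or twelfth root of unity multiplies each Jacobi sum, and the identification of the dichotomy $3\mid x_4$ versus $3\nmid x_4$ with the splitting behaviour detected by the cubic character --- back to Chapter 6 of Berndt--Evans--Williams. Since the sign normalizations $x_3\equiv1\pmod3$ and $x_4\equiv(\frac{2}{p})\pmod4$ are precisely what Theorems 6.2.10 and 6.2.5 exist to settle, your write-up is an accurate roadmap to the reference rather than an independent proof. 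Given that the paper itself only cites these facts, this is a reasonable level of detail, but be aware that nothing in your argument as written would detect an error in the final signs.
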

Now we are in a position to prove our theorem.

{\bf Proof of Theorem \ref{Thm. six}}. Throughout this proof, we let $m=\frac{p-1}{6}$ and let
$$R_p:=\bigg[\(\frac{c_i+c_j}{p}\)\bigg]_{1\le i,j\le m}.$$
We also let $\chi_p$ denote a generator of the character group $\chi(\Z/p\Z)$.
For $1\le k\le m$, we let
$$\gamma_k:=\sum_{j=1}^m\(\frac{1+c_j}{p}\)\chi_p^k(c_j).$$
We claim that $\gamma_k$ are exactly all the eigenvalues of $R_p$. In fact, for any $1\le i\le m$ we have
\begin{align*}
\sum_{j=1}^m\(\frac{c_i+c_j}{p}\)\chi_p^k(c_j)
&=\sum_{j=1}^m\(\frac{1+c_j/c_i}{p}\)\chi_p^k(c_j/c_i)\chi_p^k(c_i)\\
&=\sum_{j=1}^m\(\frac{1+c_j}{p}\)\chi_p^k(c_j)\chi_p^k(c_i).
\end{align*}
This implies that
$$R_p{\bf z}_k=\gamma_k{\bf z}_k,$$
where
$${\bf z}_k=(\chi_p^k(c_1),\cdots,\chi_p^k(c_m))^T.$$
As in the proof of Theorem \ref{Thm. (c,1)p}, one can verify that our claim holds. In addition, as $R_p$ is a real symmetric matrix, each $\gamma_k$ is real. Hence it is easy to see that
\begin{equation}\label{Eq. in the proof of det Rp prod of gamma k}
\det R_p=\begin{cases}\gamma_m\cdot\Lambda&\mbox{if}\ p\equiv 7\pmod{12},\\ \gamma_m\gamma_{m/2}\cdot\Lambda&\mbox{if}\ p\equiv 1\pmod{12},\end{cases}
\end{equation}
where
$$\Lambda=\begin{cases}\prod_{j=1}^{m-1}\gamma_k=\prod_{j=1}^{\frac{m-1}{2}}\gamma_k^2&\mbox{if}\ p\equiv 7\pmod{12},\\\prod_{j\ne m,m/2}\gamma_k=\prod_{j=1}^{\frac{m}{2}-1}\gamma_k^2&\mbox{if}\ p\equiv 1\pmod{12}.\end{cases}$$
For any $1\le i\le m$, we choose an integer $d_i$ such that $d_i^6\equiv c_i\pmod p.$ It is easy to verify that
\begin{equation}\label{Eq. 2 in the proof of Thm. six}
\gamma_m=\sum_{j=1}^m\(\frac{1+d_j^6}{p}\)=\frac{\psi_6(1)}{6}=\frac{-1-2x_3}{3}.
\end{equation}
The last equality follows from Lemma \ref{Lem. Jacobsthal sums} and Lemma \ref{Lem. Jacobsthal sums2}. Moreover, when $p\equiv1\pmod{12}$, we have $2\mid m$. One can verify that
\begin{equation}\label{Eq. 3 in the proof of Thm. six}
\gamma_{m/2}=\sum_{j=1}^m\(\frac{1+d_j^6}{p}\)\(\frac{d_j}{p}\)
=\frac{\phi_6(1)}{6}=\begin{cases}\frac{1}{3}x_4(-1)^{\frac{p-1}{4}}&\mbox{if}\ 3\mid x_4,\\-x_4(-1)^{\frac{p-1}{4}}&\mbox{if}\ 3\nmid x_4.\end{cases}
\end{equation}
Fix a primitive root $\xi$ modulo $p$. We have
\begin{align*}
\det R_p&=\det\bigg[\(\frac{\xi^{6i}+\xi^{6j}}{p}\)\bigg]_{0\le i,j\le m-1}\\
&=\det\bigg[\(\frac{\xi^{6(i-j)}+1}{p}\)\bigg]_{0\le i,j\le m-1}\\
&=\det C(t_0,t_1,\cdots,t_{m-1}),
\end{align*}
where $t_i=(\frac{\xi^{6i}+1}{p})$ for any $0\le i\le m-1$. Clearly $t_0,\cdots,t_{m-1}$ satisfy the condition (\ref{Eq. condition for ai}) in Lemma \ref{Lem. The key lemma}. Observe that
\begin{equation}\label{Eq. in the proof of det Rp sum of ti}
\sum_{i=0}^{m-1}t_i=\sum_{j=1}^{m}\(\frac{1+c_j}{p}\)=\gamma_m,
\end{equation}
and that when $m$ is even,
\begin{equation}\label{Eq. in the proof of det Rp sum of ti2}
\sum_{i=0}^{m-1}(-1)^it_i=\sum_{i=0}^{m-1}\(\frac{\xi^{i}}{p}\)\(\frac{\xi^{6i}+1}{p}\)
=\sum_{j=1}^m\(\frac{1+d_j^6}{p}\)\(\frac{d_j}{p}\)=\gamma_{m/2}.
\end{equation}
Applying Lemma \ref{Lem. The key lemma} and using (\ref{Eq. in the proof of det Rp prod of gamma k})-(\ref{Eq. in the proof of det Rp sum of ti2}), one can obtain that $\Lambda$ is an integral square. This implies our desired result. \qed

\Ack\ The author would like to thank the two anonymous referees for their careful reading and indispensable suggestions. The author also thank Prof. Hao Pan, Dr. Yue-Feng She and Dr. Li-Yuan Wang for their encouragement.

Prof. Robin Chapman unexpectedly passed away (aged just 57) on October 18, 2020. We dedicate this paper to his memory.

This research was supported by the National Natural Science Foundation of China (Grant No. 11971222) and NUPTSF (Grant No. NY220159).

\end{document}